\providecommand{\Div}{\operatorname{div}}          
\providecommand{\Dim}{\operatorname{dim}}            
\providecommand{\dim}{\Dim}
\providecommand{\argmin}{\operatorname*{argmin}}  
\newcommand{\Vd}{{\mathbf{d}}}
\newcommand{\Vf}{{\mathbf{f}}}
\newcommand{\Vg}{{\mathbf{g}}}
\newcommand{\Vn}{{\mathbf{n}}}
\newcommand{\Vq}{{\mathbf{q}}}
\newcommand{\Vt}{{\mathbf{t}}}
\newcommand{\Vu}{{\mathbf{u}}}
\newcommand{\Vv}{{\mathbf{v}}}
\newcommand{\Vx}{{\mathbf{x}}}
\providecommand{\Bx}{{\boldsymbol{x}}}
\newcommand{\VD}{{\mathbf{D}}}
\newcommand{\VI}{{\mathbf{I}}}
\newcommand{\VJ}{{\mathbf{J}}}
\providecommand{\Ca}{{\cal A}}
\providecommand{\Cb}{{\cal B}}
\providecommand{\Ci}{{\cal I}}
\providecommand{\Cj}{{\cal J}}
\providecommand{\Co}{{\cal O}}
\providecommand{\Ct}{{\cal T}}
\providecommand{\Cx}{{\cal X}}
\providecommand{\bbN}{\mathbb{N}}
\providecommand{\bbR}{\mathbb{R}}
\newcommand{\DX}{\,\mathrm{d}\Bx}
\newcommand{\dS}{\,\mathrm{d}S}
\newcommand*{\Hm}[2][\defaultdomain]{H^{#2}({#1})}
\newcommand*{\Hone}[1][\defaultdomain]{\Hm[#1]{1}}
\newcommand{\uin}{u_\mathrm{in}}
\newcommand{\uout}{u_\mathrm{out}}
\newcommand{\dOin}{\partial\Omega^\mathrm{in}}
\newcommand{\dOout}{\partial\Omega^\mathrm{out}}
\newcommand{\dOinzero}{\partial\Omega_0^\mathrm{in}}
\newcommand*{\dn}[1]{\frac{\partial {#1}}{\partial \Vn}}
\newcommand{\Uad}{\mathcal{U}_{\mathrm{ad}}}
\newcommand{\Tad}{\mathcal{T}_{\mathrm{ad}}}
\newcommand{\Winfty}{W^{1,\infty}(\bbR^d; \bbR^d)}
\newcommand{\sym}{\mathrm{sym}}
\DeclareMathOperator{\tr}{{tr}}
\newcommand{\pOmega}{\begin{tikz} \draw[thick] (0,0)--(0.07,0)--(0.02,0.13)--(0.12,0.22)--(0.22,0.13)--(0.17,0)--(0.24,0); \end{tikz}}
\newtheorem{remark}{Remark}
\newtheorem{assumption}{Assumption}
\numberwithin{theorem}{section}
\newcommand{\TheTitle}{Higher-order moving mesh methods for PDE-constrained shape optimization}
\newcommand{\TheShortTitle}{Higher-order moving mesh methods for shape optimization}
\newcommand{\TheAuthors}{A.~Paganini, F.~Wechsung and P. E. Farrell}
\headers{\TheShortTitle}{\TheAuthors}
\title{{\TheTitle}\thanks{Submitted to the editors DATE.
}}
\author{
  Alberto Paganini\footnotemark[2]
  \and
  Florian Wechsung\footnotemark[2]
  \and
  Patrick E. Farrell\thanks{University of Oxford (\email{paganini@maths.ac.ox.uk}, \email{wechsung@maths.ac.ox.uk}, \email{farrellp@maths.ac.ox.uk}).}
}
\begin{document}

\maketitle

\begin{abstract}
We present a new approach to discretizing shape optimization problems that generalizes
standard moving mesh methods to higher-order mesh deformations and that is naturally
compatible with higher-order finite element discretizations of PDE-constraints.
This shape optimization method is based on discretized deformation
diffeomorphisms and allows for arbitrarily high resolution of shapes with arbitrary smoothness.
Numerical experiments show that it allows the solution of PDE-constrained
shape optimization problems to high accuracy.
\end{abstract}

\begin{keywords}
shape optimization, PDE-constraint, finite elements, moving mesh
\end{keywords}

\begin{AMS}
49Q10, 65N30
\end{AMS}

\section{Introduction}\label{sec:intro}
Shape optimization problems are optimization problems {where} the control to
be optimized is the shape of a domain. Their basic formulation generally reads
\begin{equation}\label{eq:shapeoptproblemnoconstraint}
\text{find }\Omega^*\in\argmin_{\Omega \in \Uad} \Cj(\Omega)\,,
\end{equation}
where $\Uad$ denotes a collection of \emph{admissible shapes} and
$\Cj: \Uad \to\bbR$ represents a \emph{shape functional}.
In many applications, the shape functional depends not only on the shape of a domain
$\Omega\subset{\bbR^d}$,
but also on the solution $u$ of a boundary value problem (BVP) posed on $\Omega$,
in which case \cref{eq:shapeoptproblemnoconstraint} becomes
\begin{subequations}\label{eq:shapeoptproblem}
\begin{equation}\label{eq:shapefunctional}
\text{find }\Omega^*\in\argmin_{\Omega \in \Uad} \Cj(\Omega, u_\Omega)
\quad \text{subject to}
\end{equation}
\begin{equation}\label{eq:stateconstr}
u_\Omega\in V(\Omega)\,, \quad
a_\Omega(u_\Omega, v) = f_\Omega(v) \quad \text{for all } v\in W(\Omega)\,,
\end{equation}
\end{subequations}
where \cref{eq:stateconstr} represents the variational formulation of a BVP that acts as a 
PDE-constraint.

These  problems are said to be \emph{PDE-constrained} and are notoriously difficult
to solve because the dependence of $\Cj$ on the domain is nonconvex. Additionally, the function
$u_\Omega$ cannot be computed analytically. Even approximating it with a numerical method is
challenging because the computational domain of the PDE-constraint is the unknown variable to be
solved for in the shape optimization problem.

The literature abounds with numerical methods for BVPs. Here, we consider approximation
by means of finite elements, which has become the most popular choice for PDE-constrained 
shape optimization
due to its flexibility for engineering applications. Nevertheless, it is worth mentioning that alternatives 
based on other discretizations have also been considered
\cite{EpHa12, BaCiOfStZa15, AnBiVe15,ScIlScGa13}.

Most commonly, PDE-constrained shape optimization problems are formulated
with the aim of further improving the performance of an initial design $\Omega^0$.
The standard procedure to pursue this goal  is to iteratively update
some parametrization of $\Omega^0$ to decrease the value of $\Cj$.
Obviously, the choice of this parametrization has an enormous influence on
the design of the related shape optimization algorithm and on the search space $\Uad$ itself.

In this work, we parametrize shapes by applying deformation
diffeomorphisms to the  initial guess $\Omega^0$.
In this framework, solving a shape optimization problems translates into
constructing an optimal diffeomorphism.
To construct this diffeomorphism with numerical methods, we introduce a
discretization of deformation vector fields.
This approach can be interpreted as a higher-order generalization of
standard moving mesh methods and can be combined with isoparametric finite elements
to obtain a higher-order discretization of the PDE-constraint.

There are several advantages to using
higher-degree and smoother transformations.
First, higher-degree parametrization of domains allows for the
consideration of more general shapes (beyond polytopes).
Secondly, the efficiency of a higher-order discretization of a BVP hinges on
the regularity of its solution, which depends on the regularity of the
computational domain, among other factors.
Finally, a smoother 
discretization of deformation vector fields allows the computation of
more accurate Riesz representatives of shape derivatives \cite{Pa17, HiPaSa15}, and thus,
more accurate descent directions for shape optimization algorithms.

Our approach is generic and allows for the discretization of domain transformations based on
B-splines, Lagrangian finite elements, or harmonic functions, among others.
This discretization can comprise arbitrarily many basis functions and thus allow for arbitrarily
high resolution of shapes with arbitrary smoothness.
Moreover, because our approach decouples the discretization of the state and the
control, it is straightforward to implement, and requires typically no modification of existing finite element software. 
This is a significant advantage for practical applications.

The rest of this article is organized as follows.
In \cref{sec:formulation}, we describe how we model the search space $\Uad$ 
with deformation diffeomorphisms and discuss the advantages and disadvantages of this choice.
In \cref{sec:optimization}, we give a brief introduction to \emph{shape calculus} and
explain how to compute steepest-descent updates for deformation diffeomorphisms
using shape derivatives of shape functionals.
In \cref{sec:PDEdJ}, we emphasize that having a PDE constraint necessitates the
solution of a BVP and its adjoint at each step of the optimization, and comment on the error introduced by their finite element discretizations.
In  \cref{sec:isofem}, we give an introduction to isoparametric finite elements and explain
why it is natural to employ this kind of discretization to approximate the state variable $u_\Omega$ 
when the domain $\Omega$ is modified by a diffeomorphism.
In \cref{sec:implementation}, we examine implementation aspects of the algorithm
resulting from \cref{sec:optimization} and \cref{sec:isofem}.
In particular, we give detailed remarks for an efficient implementation of a decoupled discretization
of the state and control variables.
Finally, in \cref{sec:numexp}, we perform numerical experiments.
On the one hand, we consider a well-posed test case and investigate the impact of the discretization
of the state and of the control variables on the performance of higher-order moving mesh methods,
showing that these methods can be employed to solve PDE-constrained shape optimization problems
to high accuracy.
On the other hand, we consider more challenging PDE-constrained shape optimization problems and
show that the proposed shape optimization method is not restricted to a specific problem.
\begin{remark}
Shape optimization problems with a distinction between computational domain
and control variable also exist.
For instance, this is the case for PDE-constrained optimal control problems
where the control is a piecewise constant coefficient in the PDE-constraint \cite{Pa16, LaSt16},
in which case the control is the shape of the contour levels of the piecewise constant coefficient.
The approach suggested in this work covers already this more general type of shape optimization problem.
However, to simplify the discussion and reduce the amount of technicalities,
we consider only problems of the form
\cref{eq:shapeoptproblem}.
\end{remark}
\section{Parametrization of shapes via diffeomorphisms}\label{sec:formulation}
Among the many possibilities for defining $\Uad$,
we choose to construct it by collecting all domains that can be obtained by applying
(sufficiently regular) geometric transformations\footnote{A geometric transformation
is a bijection from $\bbR^d$ onto itself.} to an initial domain $\Omega^0$, that is,
\begin{equation}\label{eq:Uad}
\Uad \coloneqq \{T(\Omega^0):\,T\in\Tad\}\,,
\end{equation}
where $\Tad$ is (a subgroup of) the group of $W^{1,\infty}$-diffeomorphisms.
We recall that $\Winfty$ is the Sobolev space of locally integrable vector fields with
essentially bounded weak derivatives. We impose this regularity requirement on $\Tad$
to guarantee that  the state constraint \cref{eq:stateconstr} is well-defined for every domain
in $\Uad$ (assuming that it is well-defined on the initial domain $\Omega^0$).
Note that it may be necessary to strengthen the regularity requirements on $\Tad$
if the PDE-constraint is a BVP of higher order such as, for example, the biharmonic equation
\cite{BaMa14}.

While there are many alternatives (for instance level sets \cite{AlJoTo02}
or phase fields \cite{GaHeHiKaLa16}), we prefer to describe $\Uad$
as in \cref{eq:Uad} because it incorporates an explicit description of the boundaries
of the domains contained within it.
In fact, describing shapes via diffeomorphisms is a standard approach in shape optimization;
cf. \cite[Chap. 3]{DeZo11}.
From a theoretical point of view, it is possible to impose a metric on \cref{eq:Uad} and to
investigate the existence of optimal solutions within this framework \cite{SiMu76}.
Recently, the convergence of Newton's method in this framework has also been investigated \cite{St16}.

\begin{remark}
The representation of domains via transformations in \cref{eq:Uad} is not unique, 
and it is generally possible to find two transformations $T_1\in \Tad$ and $T_2\in \Tad$
such that $T_1 \neq T_2$ and $T_1(\Omega^0) = T_2(\Omega^0)$. For instance, this is the
case if $\Omega^0$
is a ball and $T_2=T_1\circ T_R$, where $T_R$ is a rotation around the center of $\Omega^0$.
To obtain a one-to-one correspondence between shapes and transformations,
one can introduce equivalence classes, but this is not particularly relevant for this work.
\end{remark}

To shorten the notation, we introduce the reduced functional
\begin{equation}\label{eq:redfct}
j:\Uad \to \bbR\,,\quad \Omega \mapsto \Cj(\Omega,u_{\Omega})\,,
\end{equation}
which is well-defined under the following assumption on the PDE-constraint \cref{eq:stateconstr}.\begin{assumption}\label{ass:welldefBVP}
Henceforth, we assume that the BVP \cref{eq:stateconstr} that acts as PDE-constraint is
well-defined in the sense of Hadamard: for every $\Omega\in\Uad$
the BVP \cref{eq:stateconstr} has a unique solution $u_\Omega$ that depends continuously on the 
BVP data.
\end{assumption}

In \cref{sec:intro}, we mentioned that shape optimization problems are solved
updating iteratively some parametrization of $\Omega^0$, that is,
constructing a sequence of domains $\{\Omega^{(k)}\}_{k\in\bbN}$ so that
$\{j(\Omega^{(k)})\}_{k\in\bbN}$ decreases monotonically.
For simplicity, we relax the terminology and call such a sequence \emph{minimizing},
although the equality
\begin{equation}\label{eq:minseq}
\lim_{k\to \infty}j(\Omega^{(k)}) = \inf_{\Omega\in\Uad} j(\Omega)\,.
\end{equation}
may not be satisfied.

When the search space $\Uad$ is constructed as in \cref{eq:Uad},
computing $\{\Omega^{(k)}\}_{k\in\bbN}$ translates into creating
a sequence of diffeomorphisms $\{T^{(k)}\}_{k\in\bbN}$. Generally, 
the sequence $\{T^{(k)}\}_{k\in\bbN}$ is constructed according to the following procedure:
\begin{itemize}
\label{test}\item[(a)] given the current iterate $T^{(k)}$, derive a tentative iterate $\tilde{T}$,
\item[(b)] if $\tilde{T}$ satisfies certain quality criteria, set $T^{(k+1)} = \tilde{T}$ and move to the next step; 
otherwise compute another $\tilde{T}$.
\end{itemize}
In the next section, we discuss the computation of $\tilde{T}$ with shape derivatives.
For simplicity, we first assume that the state variable $u_\Omega$ is known analytically
and restrict our considerations to the reduced functional $j$ defined in \cref{eq:redfct}.
The role of the PDE-constraint and the discretization of the state
variable is discussed in \cref{sec:PDEdJ}.

\section{Iterative construction of diffeomorphisms}\label{sec:optimization}
Shape calculus offers an elegant approach for constructing
a minimizing sequence of domains $\{\Omega^{(k)}\}_{k\in\bbN}$.
The key tool is the derivative of the shape functional $\Cj$ with respect
to shape perturbations. To give a more precise
description, let us first introduce the operator
\begin{equation}\label{eq:Jtilde}
J: \Tad \to\bbR\,, \quad T \mapsto j(T(\Omega^0))\,.
\end{equation}
Since $\Tad\subset \Winfty$, which is a Banach space with respect to the norm \cite[Sect. 5.2.2]{Ev10}
\begin{equation}
\Vert T \Vert_{\Winfty} \coloneqq
\sum_{\vert \alpha \vert \leq 1} \mathrm{ess}\sup \Vert \VD^{\alpha} T \Vert\,,
\end{equation}
we can formally define the directional derivative of $J$ at $T\in \Tad$ in the direction
$\Ct\in\Winfty$ through the limit
\begin{equation}\label{eq:dirder}
dJ(T; \Ct) \coloneqq \lim_{s\to 0^+} \frac{J((\Ci+s \Ct)\circ T) - J(T)}{s}
= \lim_{s\to 0^+} \frac{J(T+s \Ct\circ T) - J(T)}{s}\,.
\end{equation}
\begin{remark}
Note that $\Ci+s \Ct$ is a $W^{1,\infty}$-diffeomorphism for sufficiently small $s$ 
\cite[Lemma 6.13]{Al07}.
\end{remark}

A shape functional $\Cj$ is said to be \emph{shape differentiable} (in $T(\Omega^0)$)
if the corresponding functional \cref{eq:Jtilde} is Fr\'{e}chet differentiable (in $T$), that is, if
\cref{eq:dirder} defines a linear continuous operator on $\Winfty$
such that
\begin{equation}
\vert J(T+s \Ct\circ T)\ - J(T)- dJ(T; s\Ct)\vert = o(s)
\quad \text{for all } \Ct\in \Winfty\,.
\end{equation}

\begin{remark}
Generally, \cref{ass:welldefBVP} is not sufficient to guarantee that
$\Cj$ is shape differentiable. In particular, it is necessary to ensure that
the solution operator $\Omega \mapsto u_\Omega$ is continuously differentiable;
cf. \cite[Sect. 1.6]{HiPiUlUl09}.
\end{remark}

The Fr\'{e}chet derivative $dJ$ can be used to construct a sequence of 
diffeomorphisms $\{T^{(k)}\}_{k\in\bbN}$ to solve \cref{eq:shapeoptproblem}
in a steepest descent fashion.
More specifically, the entries of this sequence take the form
\begin{equation}\label{eq:Tsequence}
T^{(0)}(\Vx) = \Vx\quad \text{and} \quad T^{(k+1)}(\Vx) = (\Ci + \mathrm{d}T^{(k)})\circ(T^{(k)}(\Vx)),
\end{equation}
where the update $\mathrm{d}T^{(k)} :\bbR^d \to \bbR^d$ is computed with the help of $dJ$.
For instance, we could define \cite[Page 103]{HiPiUlUl09}
\begin{equation}\label{eq:dTinf}
dT^{(k)} \in
\argmin_{\substack{\Ct \in \Winfty\,,\\ \Vert \Ct\Vert_{W^{1,\infty}}=1}} dJ(T^{(k)}; \Ct)\,.
\end{equation}
Unfortunately, such a descent direction may not exist without making further assumptions
on $dJ$ because $\Winfty$ is not reflexive; cf. \cite{Ja63}. However, in \cite{LaSt16} it has been shown
that in many instances (and under suitable assumptions), the operator $dJ$ takes the form
\begin{equation}\label{eq:dJtensor}
dJ(T^{(k)}; \Ct) = \int_{T^{(k)}(\Omega_0)} \sum_{i,j=1}^d s_1^{i,j} \VD\Ct^{i,j} +  \sum_{\ell=1}^d s_0^\ell \Ct^\ell\DX\,,
\end{equation}
where $s_1^{i,j}$, $i,j=1,\dots, d$ and  $s_0^{\ell}$, $\ell=1,\dots, d$ are (instance dependent)
$L^{1}(\bbR^d)$-functions.
The following proposition\footnote{We provide a full proof of this proposition because, to the best of
our knowledge, this result is new.} states that, in this case, \eqref{eq:dTinf} can be used to define
steepest descent directions. 
\begin{proposition}\label{prop:dtexists}
Let $dJ$ be as in \eqref{eq:dJtensor}. Then, there exists a descent direction $dT^{(k)}$ as defined in \cref{eq:dTinf}
\end{proposition}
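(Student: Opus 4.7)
The plan is to apply the direct method of the calculus of variations to \cref{eq:dTinf}, circumventing the non-reflexivity of $\Winfty$ by regarding the $W^{1,\infty}$-unit ball inside the product space $L^\infty(\bbR^d;\bbR^d)\times L^\infty(\bbR^d;\bbR^{d\times d})$ via the map $\Ct\mapsto(\Ct,\VD\Ct)$. Since $L^1(\bbR^d)$ is separable, Banach--Alaoglu then provides weak-$*$ sequential compactness of bounded sets in each factor, which is exactly what replaces the weak compactness used in the reflexive setting.

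First, I would reduce the problem from the unit sphere to the closed unit ball. Writing $L\coloneqq dJ(T^{(k)};\cdot)$, linearity and continuity of $L$ on $\Winfty$ give $\inf_{\|\Ct\|_{W^{1,\infty}}=1}L(\Ct)=\inf_{\|\Ct\|_{W^{1,\infty}}\leq 1}L(\Ct)=-\|L\|_{\mathrm{op}}$. If $L\equiv 0$, any unit vector is trivially a minimizer; otherwise $-\|L\|_{\mathrm{op}}<0$, and any minimizer $\Ct^*$ over the closed ball must satisfy $\|\Ct^*\|_{W^{1,\infty}}=1$, since otherwise $\Ct^*/\|\Ct^*\|_{W^{1,\infty}}$ would attain a value strictly below $-\|L\|_{\mathrm{op}}$. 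Hence it suffices to establish existence over the closed ball.

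Next, I would extract a weak-$*$ limit from a minimizing sequence $\{\Ct_n\}\subset\Winfty$ with $\|\Ct_n\|_{W^{1,\infty}}\leq 1$. By Banach--Alaoglu, a subsequence (not relabeled) satisfies $\Ct_n\rightharpoonup^* \Ct^*$ in $L^\infty(\bbR^d;\bbR^d)$ and $\VD\Ct_n\rightharpoonup^* M$ in $L^\infty(\bbR^d;\bbR^{d\times d})$. Testing the distributional identity $\int_{\bbR^d}\Ct_n\,\partial_i\phi\DX=-\int_{\bbR^d}\partial_i\Ct_n\,\phi\DX$ against $\phi\in C^\infty_c(\bbR^d)$ and passing to the limit (each side being a pairing against an $L^1$ function) identifies $M=\VD\Ct^*$ in the distributional sense, so $\Ct^*\in\Winfty$. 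Weak-$*$ lower semicontinuity of $\|\cdot\|_{L^\infty}$, which follows from its representation as a supremum of $L^1$-unit-ball pairings, yields $\|\Ct^*\|_{W^{1,\infty}}\leq 1$.

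Finally, I would pass to the limit in $L$ itself. Because the coefficients $s_1^{i,j},s_0^\ell$ belong to $L^1(\bbR^d)$, every summand of $dJ(T^{(k)};\Ct_n)$ in \eqref{eq:dJtensor} is an $L^1$--$L^\infty$ duality pairing, so weak-$*$ convergence gives $dJ(T^{(k)};\Ct_n)\to dJ(T^{(k)};\Ct^*)$, and $\Ct^*$ attains the infimum. The main subtle point -- precisely the obstacle flagged in the paragraph preceding the proposition -- is that the direct method cannot be carried out in the strong $\Winfty$-topology because the unit ball there fails to be compact; the tensor structure \eqref{eq:dJtensor} is essential because it is exactly what renders $dJ(T^{(k)};\cdot)$ weak-$*$ continuous on bounded sets of $\Winfty$, and without this structural hypothesis on $dJ$ the argument would fail.
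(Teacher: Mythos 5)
Your proposal is correct and follows essentially the same route as the paper's proof: weak-$*$ sequential compactness of the $W^{1,\infty}$-unit ball via Banach--Alaoglu and the separability of $L^1$, identification of the weak-$*$ limit of the gradients with the gradient of the limit, weak-$*$ continuity of $dJ$ from the $L^1$-coefficient structure of \eqref{eq:dJtensor}, and weak-$*$ lower semicontinuity of the norm combined with linearity to place the minimizer on the unit sphere. Your explicit reduction to the closed ball and the distributional identification of the limiting gradient are slightly more detailed than the paper's treatment, but they are refinements of the same argument rather than a different approach.
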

\begin{proof}
First of all, we recall that $L^{\infty}(D)$ is isometrically isomorphic to the dual $X^*$ of 
$X = L^{1}(D)$ (for any open domain $D\subset\bbR^m$ in any fixed dimension $m$).
We denote by $\phi_D : L^{\infty}(D) \to X^*$ this isomorphism.
The duality pairing $\langle\cdot,\cdot\rangle_{X^*\times X}$ can be characterized by
\begin{equation}\label{eq:dualitypairing}
\langle f,g\rangle_{X^*\times X} = \int_D \phi_D^{-1}(f) g \DX\,.
\end{equation}
Clearly, similar pairings exist for Cartesian products of $L^{\infty}(D)$.
Finally, note that $L^{1}(D)$ is separable. Therefore,  by the Banach-Alaoglu theorem,
any bounded sequence in $L^{\infty}(D)$ has a subsequence $\{\Vx_n\}$ that converges
weakly-* to an $\Vx\in L^{\infty}(D)$, that is,
\begin{equation}
\lim_{n\to\infty} \int_D \Vx_n g \DX = \int_D \Vx g \DX\quad \text{for every } g\in L^{1}(D)\,.
\end{equation}
Using these results, we show that a steepest descent direction exists.

Let $\Ct_n$ be a minimizing sequence of \cref{eq:dTinf}. By definition,
$\Ct_n$ is bounded in $\Winfty$, and hence in $L^{\infty}(\bbR^d,\bbR^d)$, too.
Therefore, there exists a subsequence $\Ct_{n_k}$
that converges weakly-* to a $T\in L^{\infty}(\bbR^d,\bbR^d)$.
Since $\Ct_{n_k}$ is bounded in $\Winfty$, there is a subsequence $\Ct_{n_{k_\ell}}$
such that $\VD\Ct_{n_{k_\ell}}$ converges weakly-* in $L^{\infty}(\bbR^d,\bbR^{d,d})$.
By the definition of weak derivative, it is easy to see that the weakly-* limit of
$\VD\Ct_{n_{k_\ell}}$ is $\VD T$.
This shows that $T\in W^{1,\infty}(\bbR^d,\bbR^d)$.

Since \cref{eq:dJtensor} is a sum of duality pairings as in \cref{eq:dualitypairing}
(with $D=\bbR^d$ and $g=\chi_{T^{(k)}(\Omega_0)} s_{1}^{i,j}$ or $g=\chi_{T^{(k)}(\Omega_0)} s_{0}^{\ell}$,
where $\chi_{T^{(k)}(\Omega_0)}$ is the characteristic function associated to $T^{(k)}(\Omega_0)$),
$dJ$ is weakly-* continuous.
Therefore, $T$ is a minimizer, because it is the weak-* limit
of $\Ct_{n_{k_\ell}}$, which is a subsequence of a minimizing sequence.

Finally, to show that $\Vert T \Vert_{\Winfty}=1$, we recall that the norm of a Banach space is weak-*
lower semi-continuous. Therefore,
\begin{equation}
\Vert T \Vert_{\Winfty} \leq \liminf_{k\to\infty} \Vert \Ct_{n_k} \Vert_{\Winfty} \leq 1\,,
\end{equation}
and since $dJ(T^{(k)};\cdot)$ is linear, $\Vert T \Vert_{\Winfty} = 1$.

To conclude, note that $dJ(T^{(k)};T)>-\infty$ because $dJ(T^{(k)};\cdot)$ is continuous.
\end{proof}

Although possibly well-defined, it is challenging to compute such a descent direction $dT^{(k)}$
(because $\Winfty$ is infinite dimensional and neither reflexive nor separable).
One possible remedy is to introduce a Hilbert subspace 
$(\Cx, (\cdot,\cdot)_\Cx)$, $\Cx \subset \Winfty$, and to compute
\begin{equation}\label{eq:dTX}
dT^{(k)}_\Cx \coloneqq 
\argmin_{\substack{\Ct \in \Cx\,,\\ \Vert \Ct\Vert_{\Cx}=1}} dJ(T^{(k)}; \Ct)\,,
\end{equation}
that is, the gradient of $dJ$ with respect to $(\cdot,\cdot)_\Cx$.
Up to a scaling factor, the solution of \cref{eq:dTX} can be computed by solving
the variational problem: find $dT^{(k)}$ such that
\begin{equation}\label{eq:dTXBVP}
(dT^{(k)}, \Ct)_\Cx =  -dJ(T^{(k)}; \Ct)\quad \text{for all }\Ct \in \Cx\,,
\end{equation}
which is well-posed by the Riesz representation theorem.
However, the condition $\Cx \subset \Winfty$ is restrictive (for a Hilbert space).
For instance, the general Sobolev inequalities guarantee that
the Sobolev space $H^k(\bbR^d;\bbR^d)$ is contained in $\Winfty$
only for $k\geq d/2+1$ \cite[Sect. 5.6.3]{Ev10}.

A more popular approach is to introduce a
finite dimensional subspace $Q_N\subset \Cx \cap \Winfty$
and to compute the solution of
\begin{equation}\label{eq:dTQn}
(dT^{(k)}_N, \Ct_N)_\Cx =  -dJ(T^{(k)}; \Ct_N)\quad \text{for all }\Ct_N \in Q_N\,.
\end{equation}
In this case, the requirement  $\Cx \subset \Winfty$
can be dropped as long as the dimension $N\coloneqq\dim(Q_N)$ of $Q_N$ is finite.
However, note that if $\{Q_N\}_{N\in\bbN}$ is a family of nested finite dimensional spaces
such that $\overline{\cup_{N\in\bbN} Q_N}^{\Cx}=\Cx$,
the sequence $dT^{(k)}_N$ can be interpreted as the Ritz-Galerkin approximation of $dT^{(k)}$.
Therefore, as $N\to \infty$, the sequence $dT^{(k)}_N$ may converge to an element of
$\Cx\setminus \Winfty$, which does not qualify as an admissible update.

The trial space $Q_N$ can be constructed with
linear Lagrangian finite elements defined on (a mesh of) a hold-all domain $D\supset\Omega^0$.
The resulting algorithm is equivalent to standard moving mesh methods \cite{Pa17}. 
Alternatively, one can employ tensorized B-splines \cite{HiPa15}.
Lagrangian finite elements have the advantage of inclusion in standard finite element software,
whereas B-splines offer higher regularity, which is often desirable (as we will argue in \cref{sec:implementation}).
For instance, univariate B-splines of degree $d$ are in $W^{d, \infty}(\bbR)$ \cite{Ho03},
whereas Lagrangian finite elements are not even $C^1$.
As for the Hilbert space $\Cx$, one usually opts for $\Cx = H^1(D)$
or, equivalently, for $H^{1/2}(\partial\Omega^{(k)})$ combined with an elliptic extension operator
onto $D$ \cite{ScSiWe16}. This choice can be motivated by considerations of the shape
Hessian \cite{ScSiWe16,EpHa12}.

To the best of our knowledge, it has not been settled yet which definition of 
steepest direction among \cref{eq:dTinf,eq:dTX,eq:dTQn} is best suited to formulate
a numerical shape optimization algorithm. Since the focus of this work is more on the discretization of
shape optimization problems than on actual optimization algorithms, we
postpone investigations of this topic to future research. In our numerical experiments
in \cref{sec:numexp}, we will employ \cref{eq:dTQn}, which is the computationally most
tractable definition. However, note that computing steepest directions according to
\cref{eq:dTinf} or \cref{eq:dTX} would also inevitably involve some discretization, because
$\Winfty$ (and generally $\Cx$) are infinite dimensional.

We conclude this section with the Hadamard homeomorphism theorem
\cite[Thm 1.2]{Ka94}, which gives explicit criteria to verify that
the entries of the sequence $\left\{ÊT^{(k)}\right\}_{k\in\bbN}$
defined in \cref{eq:Tsequence} are admissible transformations.
\begin{theorem}\label{thm:Hadamard}
Let $X$ and $Y$ be finite dimensional Euclidean spaces, and let 
$T:X\to Y$ be a $C^1$-mapping that satisfies the following conditions:
\begin{enumerate}
\item $\det(\VD T)(x) \neq 0$ for all $x\in X$.
\item $\Vert T(x) \Vert \to \infty$ as $\Vert x \Vert \to \infty$.
\end{enumerate}
Then $T$ is a $C^1$-diffeomorphism from $X$ to $Y$.
\end{theorem}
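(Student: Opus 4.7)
The plan is to establish, in turn, that $T$ is a local diffeomorphism, a proper map, surjective, and injective; the $C^1$-regularity of the inverse will then be automatic. First, hypothesis~(1) combined with the classical inverse function theorem gives that, for every $x\in X$, there exist neighborhoods $U\ni x$ and $V\ni T(x)$ on which $\rst{T}{U}\colon U\to V$ is a $C^1$-diffeomorphism. In particular $T$ is an open map, and once bijectivity is proven, the family of local inverses will assemble into a global $C^1$-inverse with no extra work.

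Next I would use hypothesis~(2) to show that $T$ is proper. For any compact $K\subset Y$, coercivity forbids a sequence $\{x_n\}\subset T^{-1}(K)$ with $\Vert x_n\Vert\to\infty$, so $T^{-1}(K)$ is bounded; continuity of $T$ makes it closed in $X$, and hence compact by Heine--Borel. A proper continuous map between locally compact Hausdorff spaces is closed, so $T$ is simultaneously open and closed. Surjectivity then drops out: $T(X)$ is a nonempty clopen subset of the connected space $Y$, so $T(X)=Y$.

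The main obstacle is injectivity. My preferred argument is the covering-space route: a proper surjective local homeomorphism between connected, locally path-connected Hausdorff spaces is automatically a finite-sheeted covering map. Because a finite dimensional Euclidean space is simply connected, every connected covering of $Y$ is a homeomorphism, so the covering $T$ has exactly one sheet and is therefore injective. A more hands-on alternative, if one wants to avoid covering-space theory, is to lift straight segments in $Y$ through the local inverses furnished by step one and invoke properness to extend the lift to the whole segment together with the uniqueness of such lifts; but the covering-space formulation is cleanest. Combining these four properties, $T$ is a $C^1$-bijection whose local inverses glue to a global $C^1$-inverse, which is exactly the statement that $T$ is a $C^1$-diffeomorphism from $X$ to $Y$.
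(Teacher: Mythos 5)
Your argument is correct. Note, however, that the paper does not prove this statement at all: it is quoted as the Hadamard homeomorphism theorem with a pointer to \cite[Thm 1.2]{Ka94}, so there is no internal proof to compare against. What you have written is the classical covering-space proof of Hadamard's global inverse function theorem, and each step holds up: condition (1) plus the inverse function theorem gives a local $C^1$-diffeomorphism (hence an open map); condition (2) gives properness, since a sequence in $T^{-1}(K)$ escaping to infinity would force $\Vert T(x_n)\Vert\to\infty$ inside the bounded set $K$; properness over a locally compact Hausdorff target makes $T$ closed, so $T(X)$ is a nonempty clopen subset of the connected space $Y$ and $T$ is onto; and a proper surjective local homeomorphism onto a connected, locally path-connected space is a finite-sheeted covering, which over the simply connected $Y$ and with connected total space $X$ must be single-sheeted, giving injectivity. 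Gluing the local inverses then yields a global $C^1$-inverse. The one implicit point worth making explicit is that $\dim X=\dim Y$, which is forced by the hypothesis that $\det(\VD T)$ is defined and nonvanishing. The cited source of the paper reaches the same conclusion by a quite different, variational route (via mountain pass theorems), whereas your topological argument is the more standard and self-contained one; either is a legitimate way to supply the missing proof.
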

A counterpart of \cref{thm:Hadamard} for $W^{1,\infty}$-transformations
can be found in \cite{GaRa15}.

For the sequence \cref{eq:Tsequence}, note that the second hypothesis of \cref{thm:Hadamard}
is automatically satisfied if the hold-all domain $D$ is bounded, because the update
$\mathrm{d}T^{(k)}$ has compact support, and
$T^{(k+1)}(\Vx) = \Vx$ for every $\Vx\in \bbR^d\setminus\overline{D}$.

\section{Shape derivatives of PDE-constrained functionals}\label{sec:PDEdJ}

To simplify the exposition,
in the previous section we treated the dependence of $J$ on $u$ implicitly;
this dependence was hidden in the reduced functional $j$.
We now examine the consequences of this dependence, as it introduces additional difficulties.
Indeed,
it is generally the case that to evaluate the Fr\'{e}chet derivative $dJ$
it is necessary to solve (at least) one BVP.
To illustrate this fact, we
consider the following example:
\begin{subequations}\label{eq:examplePDEJ}
\begin{equation}
\Cj(\Omega, u_\Omega) = \frac{1}{2} \int_\Omega u_\Omega^2\DX\,, \quad\text{subject to}
\end{equation}
\begin{equation}
u_\Omega\in\Hone\,,\quad\int_\Omega \nabla u_\Omega\cdot\nabla v+ u_\Omega v\DX 
= \int_\Omega v \DX \quad \text{for all }Êv\in \Hone\,.
\end{equation}
\end{subequations}
Its shape derivative reads \cite[Eq. 2.12]{Pa16}
\begin{align}\label{eq:examplePDEdJ}
dJ(T;\Ct) = \int_{T(\Omega^0)} \big(&\nabla u_{T(\Omega^0)} \cdot (\VD\Ct+\VD\Ct^\top)\nabla p\\
\nonumber
&+(p+u_{T(\Omega^0)}^2 -\nabla u_{T(\Omega^0)}\cdot\nabla p - u_{T(\Omega^0)}p)\Div\Ct \big) \DX\,,
\end{align}
where $p\in H^1(T(\Omega^0))$ is the solution of the adjoint BVP
\begin{equation}\label{eq:exampleAdjBVP}
\int_{T(\Omega^0)} \nabla p\cdot\nabla v+ p v\DX 
= \int_{T(\Omega^0)} u_{T(\Omega^0)}v \DX \quad \text{for all }Êv\in H^1(T(\Omega^0))\,.
\end{equation}
Formula \cref{eq:examplePDEdJ} clearly shows that it is necessary to compute the functions $u_{T(\Omega^0)}$ and $p$ to evaluate $dJ$.
The adjoint BVP \cref{eq:exampleAdjBVP} is introduced to derive a formula of $dJ$
that does not contain the shape derivative of $u_\Omega$. This is a well-known strategy
in PDE-constrained optimization \cite[Sect. 1.6]{HiPiUlUl09}.

In general, deriving explicit formulae for Fr\'{e}chet derivatives of PDE-constrained functionals is a
delicate and error prone task. However, in many instances one can
introduce a Lagrangian functional that allows the automation of the differentiation process 
and gives the correct adjoint equations \cite[Sect. 1.6.4]{HiPiUlUl09}.
The level of automation is such that numerical software is capable of differentiating several
PDE-constrained functionals \cite{FaHaFuRo13}.
Clearly, Lagrangians are useful also for the special case of PDE-constrained shape
functionals \cite[Chap. 10]{DeZo11}, and dedicated numerical software for shape differentiation
has recently become available \cite{Sc16}.

\begin{remark}
The Hadamard-Zol\'{e}sio structure theorem \cite[Chap. 9, Thm 3.6]{DeZo11}
states that, under certain regularity assumptions on $\Omega$,
the Fr\'{e}chet derivative $dJ(\Omega;\Ct)$ depends only on perturbations 
$\Ct(\partial\Omega)$ of the domain boundary. 
As a consequence, the derivative of most shape functionals can be formulated as an integral
both in the volume $\Omega$ and on the boundary $\partial\Omega$,
and these formulations are equivalent. For instance, the boundary formulation that corresponds
to \cref{eq:examplePDEdJ} reads \cite[Eq. 2.13]{Pa16}
\begin{equation}
\int_{T(\partial\Omega^0)} \Ct \cdot \Vn \left( u_{T(\Omega^0)}^2
-\nabla u_{T(\Omega^0)}\cdot\nabla p -u_{T(\Omega^0)}p+p\right) \dS\,.
\end{equation}
When the state and the adjoint variables are replaced by numerical approximations,
these two formulae define two different approximations of $dJ$.
In the framework of finite elements, it has been shown that volume based formulations
usually offer higher accuracy compared to their boundary based counterparts
\cite{HiPaSa15, Pa15}.
Additionally, the combination of volume based formulae with piecewise linear
finite element discretization of the control variable results in shape optimization algorithms
for which the paradigms \emph{optimize-then-discretize} and \emph{discretize-then-optimize}
commute \cite{HiPa15, Be10}. This does not hold in general for boundary based formulae
because piecewise linear finite elements do not fulfill the necessary regularity requirements,
and the equivalence of boundary and volume based formulae is not guaranteed \cite{Be10}.
\end{remark}
\section{Isoparametric Lagrangian finite elements}\label{sec:isofem}

To evaluate the shape functional $\Cj(\Omega, u_\Omega)$,
it is necessary to approximate the function $u_\Omega$, which is the solution of the PDE-constraint \cref{eq:stateconstr}. For the Fr\'{e}chet derivative $dJ$, it may be necessary to
also approximate the solution $p$ of an adjoint BVP.
In this work, we consider the discretization of \cref{eq:stateconstr} and the adjoint BVP
by means of finite elements.
Finite element spaces are defined on meshes of the computational domain.
As shape optimization algorithms modify the computational domain,
a new mesh is required at each iteration. This new mesh can either be constructed
\emph{de novo} or by modifying a previously existing mesh. On the one hand, remeshing should
be avoided
because it is  computationally expensive and may introduce undesirable 
noise in the optimization algorithm. On the other hand, updating the mesh is a delicate process
and may return a mesh with poor quality (which in turn introduces noise in the optimization as well).

Isoparametric finite elements offer an interesting perspective on the process of mesh updating
that fits well with our encoding of changes in the domain via geometric transformations.
In particular, with isoparametric finite elements it is possible to mimic the modification of the 
computational domain without tampering directly with the finite element mesh. Additionally, 
isoparametric finite element theory
provides insight into the extent to which remeshing can be avoided. Next, we provide a concise recapitulation of isoparametric finite element theory. 
For simplicity, we assume that the PDE-constraint is a
linear $V$-elliptic second-order BVP. However, we believe that most of the considerations
readily cover more general BVPs.
For a more thorough introduction to isoparametric finite elements, we refer to \cite[Sect. 4.3]{Ci02}.

The Ritz-Galerkin discretization of \cref{eq:stateconstr} reads
\begin{equation}\label{eq:stateconstrFE}
\text{find}\quad u_h\in V_h(\Omega)\,, \quad
a_\Omega(u_h, v_h) = f_\Omega(v_h) \quad \text{for all } v_h\in V_h(\Omega)\,,
\end{equation}
where $V_h(\Omega)$ is a finite dimensional subspace of $V(\Omega)=W(\Omega)$.
Henceforth, we restrict ourselves to Lagrangian finite element approximations
on simplicial meshes.

Let us assume for the moment that $\Omega$ is a polytope. 
The most common construction of finite element spaces begins with a triangulation $\Delta_h(\Omega)$
of $\Omega$.
This triangulation is used to introduce global basis functions that span the finite element space.
The finite element space is called Lagrangian if the degrees of freedom of its global basis
functions are point evaluations \cite[Page 36]{Ci02}, and it is called of degree $p$
if the local basis functions, that is, the restriction of global basis functions to elements $K$
of the triangulation, are polynomials of degree $p$.

It is well known that Lagrangian finite elements on simplicial meshes are \emph{affine equivalent}. 
Affine equivalence means that we can define a reference element $\hat{K}$ and a set of reference
local basis functions $\{\hat{b}_i\}_{i\leq M}$ on $\hat{K}$, and construct a family of affine diffeomorphisms
$\{ G_K: \hat{K} \to K\}_{K\in\Delta_h(\Omega)}$ such that
the local basis functions $\{b_i^K\}_{i\leq M}$ on $K$ satisfy $b_i^K(\Vx) = \hat{b}_i(G_K^{-1}(\Vx))$.
Note that both $\{b_i^K\}_{i\leq M}$ and $\{\hat{b}_i\}_{i\leq M}$ are polynomials, because
the pullback induced by a bijective affine transformation is an automorphism.

Issues with this construction arise if $\Omega$ has curved boundaries. In this case,
we introduce first an affine equivalent finite element space $V_h(\pOmega)$ built on
the triangulation $\Delta_h(\pOmega)$ of a polytope $\pOmega$ that approximates
$\Omega$.
Then, we construct a vector field $F\in (V_h(\pOmega))^d$ such that 
$F(\partial\pOmega)\approx \partial \Omega$ and generate a (curved) triangulation
$\Delta_h(\Omega)$ by deforming the elements of $\Delta_h(\pOmega)$ according to $F$.
Finally, we define the finite element space $V_h(\Omega)$ on $\Delta_h(\Omega)$
by choosing $b_i^K(\Vx) = \hat{b}_i(G_K^{-1}(F^{-1}(\Vx)))$ as local basis functions.
This construction leads to so-called isoparametric finite elements. Again, this space is called
Lagrangian if the reference local basis functions $\{\hat{b}_i\}_{i\leq M}$ are polynomials.
However, note that the local basis functions $\{b_i\}_{i\leq M}$ 
of isoparametric Lagrangian finite elements may not be polynomials.

Isoparametric Lagrangian finite elements on curved domains
are proved to retain the approximation properties of Lagrangian finite elements
on polytopes under the following additional assumptions \cite[Thm 4.3.4]{Ci02}:
\begin{enumerate}
\item the triangulation $\Delta_h(\pOmega)$ is regular \cite[Page 124]{Ci02},
\item the mesh width $h$ is sufficiently small,
\item for every quadrature point $\Vx_q\in\hat{K}$, and for every element $K\in\Delta_h(\pOmega)$ 
\begin{equation}\label{eq:distortion} 
\Vert F(G_K(\Vx_q)) - G_K(\Vx_q) \Vert = \Co(h^p)\,,
\end{equation}
and $F(G_K(\Vx_q)) \in \partial\Omega$ whenever $G_K(\Vx_q)\in \partial\pOmega$.
\end{enumerate}
Equation \cref{eq:distortion} is sufficient to guarantee that the map $F\circ G_K$
is a diffeomorphism, and to provide algebraic estimates of the form
\begin{equation}\label{eq:Fdecay}
\Vert \VD^{\alpha} (F\circ G_K) \Vert  = \Co(h^\alpha)\,,
\end{equation}
which are necessary to derive the desired approximation estimates.

This knowledge of isoparametric finite elements is sufficient to tackle our initial problem:
solve \cref{eq:stateconstrFE} on $\Omega^{(k)}$
(where $\Omega^{(k)}\coloneqq T^{(k)}(\Omega^0)$).

In the first iteration, we construct $V_h(\Omega^0)$ in the isoparametric fashion described above.
First, we generate a triangulation of a suitable polytope
$\pOmega^0$ that approximates $\Omega^0$.
Then, we define the finite element space $V_h(\pOmega^0)$
and generate a transformation $F^{(0)}\in(V_h(\pOmega^0))^d$ that maps $\pOmega^0$ 
onto $\Omega^0$. Finally, we construct $V_h({\Omega}^0)$ by combining 
reference local basis functions with the diffeomorphism $F^{(0)}$.

In the next iteration, we construct $V_h(\Omega^{(1)})$ 
in the same way, but replacing
the diffeomorphism $F^{(0)}$ with the interpolant 
$$(V_h(\pOmega^0))^d\ni F^{(1)} \coloneqq \Ci_h (T^{(1)}\circ F^{(0)})\,,$$
where $\Ci_h$ denotes the interpolation operator onto $(V_h(\pOmega^0))^d$.
Since
$$T^{(1)}(\Vx) = \Vx + \mathrm{d}T^{(0)}(\Vx)\,,$$
the map $F^{(1)}$ can be written as
$$F^{(1)} = F^{(0)} + \Ci_h(\mathrm{d}T^{(0)}\circ F^{(0)}) \,.$$

Repeating this procedure at every iteration results in the isoparametric space
$V_h(\Omega^{(k)})$ being constructed with the map
\begin{align}
\nonumber F^{(k)} &=  \Ci_h(T^{(k)}\circ F^{(0)}) \,,\\
\nonumber&= \Ci_h(T^{(k-1)}\circ F^{(0)}) + \Ci_h(\mathrm{d}T^{(k-1)}\circ T^{(k-1)}\circ F^{(0)})\,,\\
\label{eq:Fk}&= F^{(k-1)} + \Ci_h(\mathrm{d}T^{(k-1)}\circ F^{(k-1)})\,,
\end{align}
where the second equality follows from $F^{(k-1)} = \Ci_h (T^{(k-1)}\circ F^{(0)})$.

In general, the map $F^{(k)}$ may not fulfill the condition \cref{eq:distortion}.
However, by $W^{1,\infty}$-error estimates of $\Ci_h$ \cite[Thm 4.3.4]{Ci02}, 
it holds that
$$\det(\VD F^{(k)})(x) \to \det(\VD (T^{(k)}\circ F^{(0)}))(x) \text{ as } h\to 0\,.$$
This, in light of \cref{thm:Hadamard}, guarantees that $F^{(k)}$
is indeed a diffeomorphism if $h$ is small enough (because $T^{(k)}\circ F^{(0)}$ is 
a diffeomorphism as well, and therefore $\det(\VD (T^{(k)}\circ F^{(0)}))(x)\neq 0$).
Additionally, note that the element transformation $G_K:\hat{K}\to K$ is affine,
and thus,
$$\VD^\alpha (F^{(k)}\circ G_K) = (\VD^\alpha (F^{(k)})\circ G_K)(\VD G_K)^{\alpha}\,.$$
Therefore,
\begin{equation}\label{eq:Fkdecay}
\Vert \VD^{\alpha} (F^{(k)}\circ G_K) \Vert 
\leq \Vert (\VD^\alpha (F^{(k)})\circ G_K)\Vert \Vert (\VD G_K)^{\alpha} \Vert\leq \Vert \VD^\alpha (F^{(k)})\Vert h^\alpha\,.
\end{equation}
The estimate \cref{eq:Fkdecay} is asymptotically equivalent to \cref{eq:Fdecay}.
This implies that modifying the transformation used to generated the isoparametric finite element
space does not affect its approximation properties
as long as $\Vert \VD^\alpha (F^{(k)})\Vert$ is moderate.

\begin{remark}
It is not strictly necessary to replace the transformation $T^{(k)}\circ F^{(0)}$
with its interpolant $F^{(k)}$. However, evaluating $T^{(k)}$ can be computationally
expensive and may not be supported natively in finite element software (in particular,
if evaluating $T^{(k)}$ involves a complicated formula).
On the other hand, as explained in the \cref{sec:optimization},
 $T^{(k)}$ lies in practice in a finite dimensional space.Therefore, the interpolation operator
$\Ci_h$ can be represented as a matrix, which can be used to significantly
speed up the computation of the update $dT^{(k)}$, as explained in \cref{sec:implementation}.
\end{remark}

\begin{remark}
Usually, the isoparametric transformation $F$ is chosen to be the identity on
elements of $\Delta_h(\pOmega)$ that do not share edges/faces with $\partial\pOmega$.
This particular choice of $F$ is made to decrease the computational cost of matrix/vector assembly,
and is not dictated by error analysis. In our approach, the function $F$ will generally differ from the
identity even in the interior of the domain.
\end{remark}

\begin{remark}
In \cite{HiPa15}, the authors suggest to purse shape optimization in Lagrangian coordinates by
reformulating shape optimization problems as an optimal control problem on the initial domain. The
resulting method is formally equivalent to the one presented in this work, but implies hard-coding of
geometric transformation into shape functionals and PDE-constraints (which is problem dependent),
and requires the derivation of Fr\'{e}chet derivatives in Lagrangian coordinates
(which are usually not considered in the shape optimization literature).
In contrast, the approach presented in this work exploits the fact that these geometric transformations
are included in standard finite element software, and allows the use of formulae for Fr\'{e}chet derivatives
that are already available in the literature.
\end{remark}

\section{Implementation aspects}\label{sec:implementation}
The previous sections consider different discretization aspects of the 
shape optimization problem \cref{eq:shapeoptproblem}.
\Cref{sec:optimization} introduces the finite dimensional space
$Q_N$ to construct the sequence of diffeomorphisms \cref{eq:Tsequence}.
\Cref{sec:isofem}, on the other hand, introduces the finite dimensional space $(V(\pOmega^0))^d$
to approximate the solution of \cref{eq:stateconstr} by means of isoparametric finite elements.

There are conflicting demands on the choice of these two finite dimensional spaces.
On the one hand, employing the same discretization
based on piecewise linear Lagrangian finite elements
greatly simplifies the implementation in existing finite element libraries and may reduce
the execution time. On the other hand, a decoupled discretization facilitates enforcing
stability in the optimization process. For instance, the authors of \cite{AlPa06,GiPaTr16} suggest the
use of linear Lagrangian finite elements
built on two nested meshes: a coarser one to discretize the geometry and a
finer one to solve the state equation. They report that this reduces the presence of spurious oscillations
in the optimized shape. 

A decoupled discretization may also be required
if one aims at using higher-order approximations of the state variable $u$. To elucidate this,
recall that the use of higher-order finite elements is motivated only if the
exact solution is sufficiently regular. More specifically,
isoparametric finite element solutions of degree $p$ converge as
$\Co(h^p)$ in the energy norm provided that the exact solution satisfies $u\in H^{p+1}(\Omega)$,
whereas the convergence rate deteriorates if $u$ is less regular \cite[Thm 3.2.1]{Ci02}
or if isoparametric finite elements are replaced by standard affine-parametric finite
elements \cite[Rmk 4.4.4 (ii)]{Ci02}.

It is virtually impossible to prescribe universal and sharp rules 
that ensure that the solution of the state constraint remains
sufficiently regular during the optimization process,
but elliptic regularity theory can provide some guidelines.
Assuming the problem data is sufficiently smooth, the solution of a linear elliptic Dirichlet BVP is
$H^{s}$-regular \cite[Def. 7.1]{Br07} when $\Omega$ has a $C^s$-boundary \cite[Thm 8.13]{GiTr01} (see also \cite{Gr11}
for an extensive treatment of elliptic regularity theory).
Therefore, it might be desirable to employ sufficiently regular transformations, so that
the regularity of the domain is preserved during the optimization process.
In this case, typical isoparametric Lagrangian finite elements are not a good choice for
$Q_N$ because they only allow $W^{1,\infty}$ piecewise polynomial representations of 
the domain transformations. The natural alternative is to employ multivariate B-splines
of degree $\tilde{p}$ \cite[Def. 4.1]{Ho03}, which are piecewise polynomials with compact support 
and are both $W^{\tilde{p},\infty}$ and $C^{\tilde{p}-1}$-regular.

For these reasons, we focus on the more general case of a decoupled discretization of
$(V(\pOmega^0))^d$ and $Q_N$ and discuss the implementation details for the following
simple optimization algorithm, which covers all fundamental aspects of shape optimization.

\subsection*{Minimal shape optimization pseudo-code}
\begin{enumerate}
\item initialize, then, for $k\geq 0$:
\item compute the state $u$ and evaluate $J$; stop if converged, otherwise
\item compute the update $dT$ solving \cref{eq:dTQn}
\item choose $s$ such that $T + sdT\circ T$ is feasible and $J$ is minimal
\item update $T$ and go back to step 2.
\end{enumerate}

\subsection*{Step 1}
First, we construct the finite element space
$(V_h(\pOmega^0))^d\coloneqq\mathrm{span}\left\{\Vv_i\right\}_{i=1}^M$
and store the coefficient vector $\Vf^{(0)}\in\bbR^M$ of the transformation
$F^{(0)}\in(V_h(\pOmega^0))^d$,
which maps $\pOmega^0$ onto (an approximation of) $\Omega^0$.
Then, we construct the space $Q_N\coloneqq\mathrm{span}\left\{\Vq_i\right\}_{i=1}^N$,
initialize the vector field $T^{(0)}\in Q_N$ to the identity, and store its coefficients in the vector
$\Vt^{(0)}\in\bbR^N$.
Finally, we store the matrix representation $\VI_h$ of the interpolation operator
$$\Ci_h : Q_N \to (V_h(\pOmega^0))^d\,.$$ The matrix $\VI_h$ is sparse if the basis functions
$\left\{\Vv_i\right\}_{i=1}^M$ and $\left\{\Vq_i\right\}_{i=1}^N$ have (small) compact support.

\subsection*{Step 2}
First, we compute the coefficients of $F^{(k)}\coloneqq\Ci_h(T^{(k)}\circ F^{(0)})$.
This is done by computing
\begin{equation}\label{eq:Fkalg}
\Vf^{(k)} = \Vf^{(k-1)}+\VI_h \Vd\Vt^{(k-1)},
\end{equation}
as will be justified in the next step.
Then, we approximate $u_{\Omega^{(k)}}$ by means of isoparametric
finite elements and evaluate $\Cj$ on the domain $\Omega^{(k)}$. If the convergence
criteria are not satisfied, we proceed further and compute an update of $T^{(k)}$.

\subsection*{Step 3}
First, we have to assemble the load vector
$\Vd\tilde{\VJ}^{(k)}_{\Vq}\coloneqq\{dJ(T^{(k)}; \Vq_i)\}_{i=1}^N$.
This can be computationally expensive because $dJ$ depends
on $u$, which is approximated with a finite element function
and lives on a finite element mesh. Therefore, to evaluate $dJ$
it is necessary to loop through the finite element
mesh. Although one loop is sufficient if one evaluates the contribution of each cell for all basis
functions in $Q_N$, evaluating these functions can be computationally expensive
and may require extensive modifications
of finite element software. Therefore, it may be desirable to employ a different
strategy, which we detail below.

Let $\{\Vv^{(k)}_i\}_{i=1}^M$ denote the isoparametric basis of $(V_h({\Omega}^{(k)}))^d$.
The vector
\begin{equation}\label{eq:dJisop}
\Vd\tilde{\VJ}^{(k)}_{\Vv^{(k)}}\coloneqq\{dJ(T^{(k)}; \Vv^{(k)}_i)\}_{i=1}^M\,,
\end{equation}
can be assembled efficiently with existing software, because
the basis functions $\{\Vv^{(k)}_i\}$ are generally included in finite element software
(whilst $\{\Vq_i\}$ may not be).
Interestingly, the product of the transpose of the interpolation matrix $\VI_h^\top$ with
\cref{eq:dJisop} can be interpreted as the approximation
\begin{equation}\label{eq:ITdJ}
\VI_h^\top\Vd\tilde{\VJ}^{(k)}_{\Vv^{(k)}}\approx 
\{dJ(T^{(k)};\Vq_i \circ (F^{(0)})^{-1}\circ(T^{(k)})^{-1})\}_{i=1}^N \,,
\end{equation}
where the right-hand side corresponds to the evaluation of $dJ(T^{(k)};\cdot)$ on functions
that move along with the domain transformation (see \cref{fig:transpqs}).
To explain the nature of the approximation in \cref{eq:ITdJ}, we introduce a new finite dimensional space
\begin{equation}\label{eq:newbasis}
Q_N^{(k)} \coloneqq\mathrm{span}\{\Vq^{(k)}_i\coloneqq \Vq_i \circ (F^{(0)})^{-1}\circ(T^{(k)})^{-1}\}_{i=1}^N\,.
\end{equation}
\begin{figure}[htb!]
\centering
\includegraphics[width=0.3\linewidth]{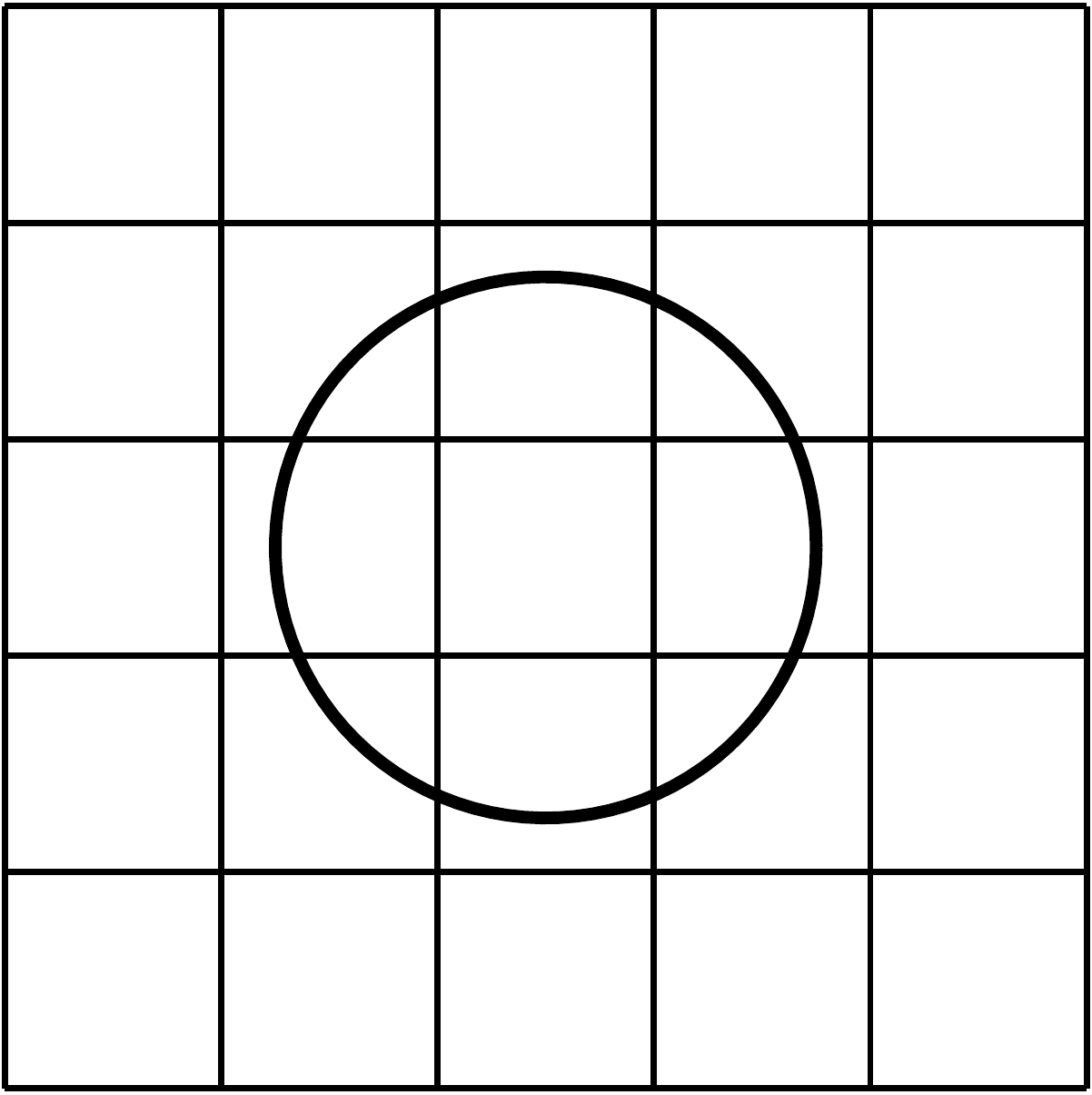}
\hspace{0.1cm}
\includegraphics[width=0.3\linewidth]{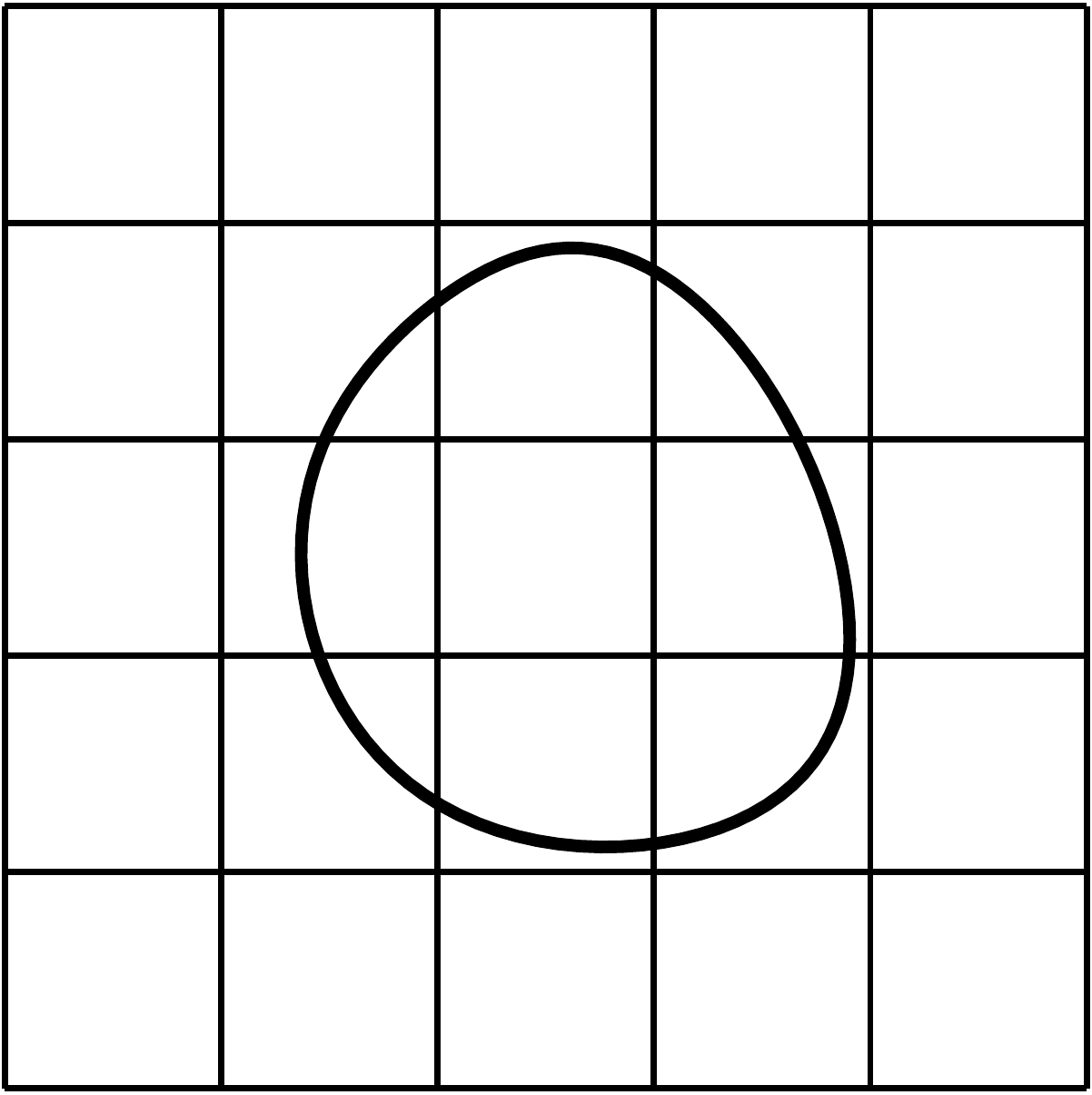}
\hspace{0.1cm}
\includegraphics[width=0.3\linewidth]{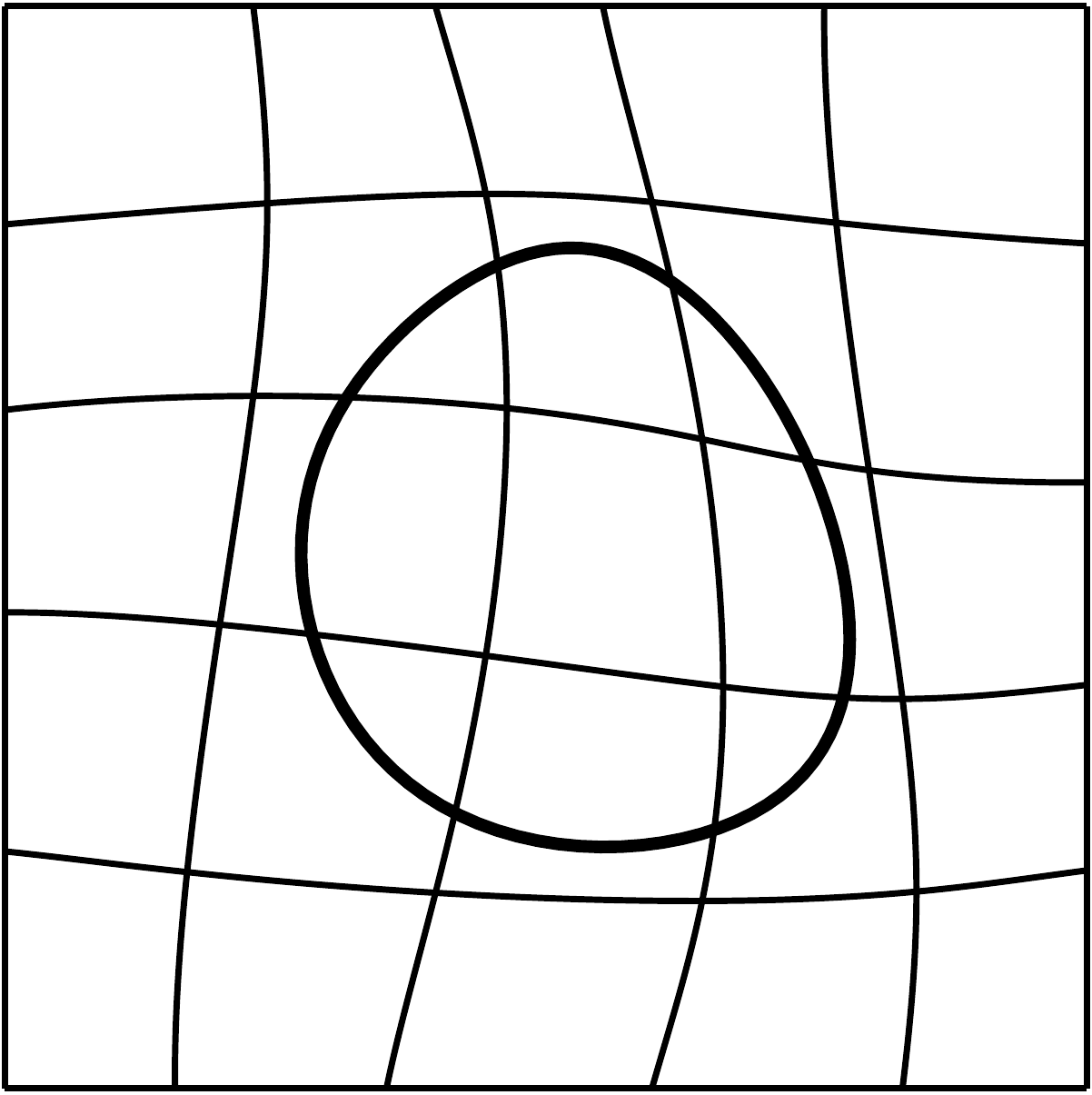}
\caption{Graphical example of a domain $\Omega^0$, a discretization $Q_N$, a perturbed
domain $\Omega^{(1)}$, and the discretization $Q_N^{(1)}$.
We assume that $Q_N$ is constructed on a grid.
\emph{Left:} initial configuration.
\emph{Center:} the domain is deformed according to a transformation $F^{(1)}$
whereas the basis functions $\Vq_i$'s of $Q_N$ are constructed on the initial regular grid.
\emph{Right:} both the domain and the grid evolve according to $F^{(1)}$; 
the basis functions $\Vq^{(1)}_i$'s of $Q_N^{(1)}$ are constructed on the curved grid
via pullback.}
\label{fig:transpqs}
\end{figure}
The space $Q_N^{(k)}$ arises naturally if
one considers shape optimization in Lagrange coordinates \cite{HiPa15}, and satisfies
$Q_N^{(k)}\subset W^{1,\infty}(\bbR^d,\bbR^d)$. Compared to $Q_N$,
this new space $Q_N^{(k)}$ has a great computational advantage: the previously 
computed matrix $\VI_h$ corresponds to the representation of
the interpolation operators
\begin{equation}
\Ci_h:Q_N \to (V_h(\Omega^0))^d
\end{equation}
as well as to the representation of
\begin{equation}\label{eq:IntOpk}
\Ci_h^{(k)}:Q_N^{(k)} \to (V_h({\Omega}^{(k)}))^d
\end{equation}
with respect to the basis $\{ \Vq^{(k)}_i\}_{i=1}^N$ and $\{\Vv^{(k)}_i\}_{i=1}^M$.
This implies that  the vector $\Vd\tilde{\VJ}^{(k)}_{\Vq^{(k)}}$ (the right-hand side of \cref{eq:ITdJ})
can be assembled more easily and quickly than $\Vd\tilde{\VJ}^{(k)}_{\Vq}$. On top of that, 
\cref{eq:IntOpk} motivates the interpretation \cref{eq:ITdJ}. Note that for given test cases,
the asymptotic rate of convergence in this approximation can be explicitely computed.


Once the load vector for the update step has been assembled, we need to assemble the stiffness matrix.
In principle, this should be done as well with respect to the new discretization
\cref{eq:newbasis}. Alternatively, one can redefine the inner
product of $\Cx$ so that
$$\big( \Vq_i,\Vq_j \big)_\Cx = \big( \Vq_i^{(k)},\Vq_j^{(k)} \big)_{\Cx^{(k)}}
\quad \text{for all } i,j=1,\dots,N\,,$$
and the stiffness matrix can be computed once and for all in the initialization step.

Finally, one computes the vector $\Vd\Vt^{(k)}$, which contains the coefficients of the
series expansion of $dT^{(k)}$ with respect to the basis of \cref{eq:newbasis}. Computing
$\VI_h \Vd\Vt^{(k)}$ returns the coefficients of the interpolant of $dT^{(k)}$ in
$(V_h({\Omega}^{(k)}))^d$. Due to the isoparametric nature of its basis $\{\Vv^{(k)}_i\}_{i=1}^M$,
these coefficients equal those resulting from interpolating $dT^{(k)}\circ F^{(k)}$ onto
$(V(\pOmega^0))^d$. This explains why \cref{eq:Fkalg} is the algebraic counterpart of \cref{eq:Fk}.

\subsection*{Step 4-5}
The simplest way to compute $s$ is by line search.
In this case, we have to evaluate $J$ on $T^{(k)}+sdT^{(k)}$ for various $s$.
In light of \cref{eq:Fkalg}, this means computing the state variable $u$ using
isoparametric finite elements whose coefficients are
$$\Vf^{(k+1)}_s = \Vf^{(k)}+s\VI_h \Vd\Vt^{(k)}\,,$$
and choosing $s$ such that the value of $J$ is minimal.
Of course, one has to enforce admissibility of $T^{(k)}+sdT^{(k)}$.
By \cref{thm:Hadamard}, it is sufficient to verify that the minimum of
\begin{equation}\label{eq:detDFk}
\det(\VD( F^{(k)}+s\Ci_h(dT^{(k)})))\approx \det(\VD (T^{(k)}+sdT^{(k)}))
\end{equation}
remains positive. However, note that small values of \cref{eq:detDFk}
may negatively affect the ellipticity constant of the BVP \cref{eq:stateconstr}, 
which in turn negatively affects the constant of finite element error estimates.
Finally, one rescales $dT^{(k)}\coloneqq sdT^{(k)}$, set
$T^{(k+1)}=(\Ci + \mathrm{d}T^{(k)})\circ(T^{(k)})$, and goes back to step 2.

\section{Numerical experiments}\label{sec:numexp}
We split our numerical investigations in two parts. In the first one,
we consider a PDE-constrained shape optimization problem that
admits stable minimizers. We use this test case to investigate the
approximation properties of the algorithm presented in \cref{sec:implementation}
for different discretizations of control and state variables. In the second part,
we test our approach on more challenging shape optimization problems for which analytical
solutions are unavailable.
The numerical results are obtained with a code based on
the finite element library Firedrake
\cite{petsc-user-ref,petsc-efficient,Dalcin2011,Rathgeber2016,Luporini2016,Chaco95,MUMPS01,MUMPS02}.

\subsection{Bernoulli free-boundary problem}
\label{sec:numexpBernoulli}
We consider the Dirichlet BVP
\begin{equation}\label{eq:BPdir}
- \Delta u = 0\;  \text{ in } \Omega\,,\; \quad u = \uin\quad  \text{ on } \dOin\,, 
\;\quad u = \uout\;  \text{ on } \dOout\,,
\end{equation}
stated on the domain $\Omega\subset \bbR^2$ depicted in \cref{fig:BernoulliDomain}.
The goal is to find the shape of $\dOin$ so that
the Neumann trace $\dn{u}$ is equal to a prescribed function $g$ on $\dOin$. 
For the sake of simplicity, we assume that the Dirichlet
data $\uin$ and the Neumann data $g$ are constants, and that only
$\dOin$ is ``free'' to move.

\begin{figure}[!htb]
    \centering
    \includegraphics{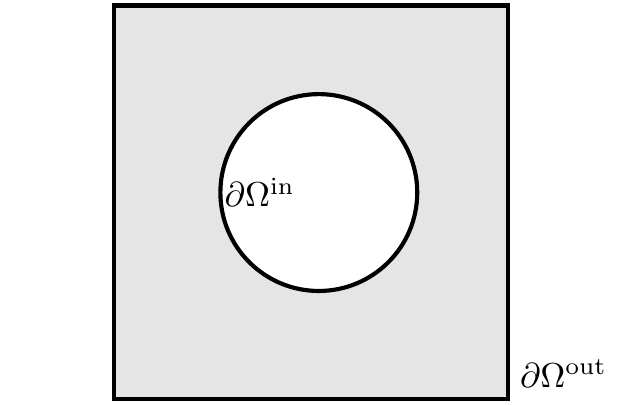}
    \caption{Computational domain for the Dirichlet BVP \eqref{eq:BPdir}.
        The external boundary $\dOout$ is a square centered at the origin with a corner in (1,1).
        The internal boundary $\dOin$ is optimized to achieve $\dn{u}\vert_{\dOin} = g$.
        }
    \label{fig:BernoulliDomain}
\end{figure}

This Bernoulli free boundary problem can be reformulated as the following
shape optimization problem \cite{EpHa06b}
\begin{equation}\label{eq:benchmark}
\inf_{\Omega \in \Uad} \Cj(\Omega, u)=\int_\Omega \nabla u\cdot\nabla u + g^2 \DX\,,
\quad \text{subject to} 
\left\{\begin{array}{rll}
-\Delta u &= 0 &\text{in }\Omega\,,\\
u &= u_\textrm{in} & \text{on } \partial\Omega^{\mathrm{in}}\,,\\
u &= u_\textrm{out} & \text{on } \partial\Omega^{\mathrm{out}}\,,
\end{array}\right.
\end{equation}
whose Fr\'{e}chet derivative reads \cite{HiPa15}
\begin{equation}\label{eq:BerDer}
dJ(T;\Ct) = \int_{T(\Omega_0)} 
\Div \Ct (\nabla u_{T(\Omega_0)}\cdot\nabla u_{T(\Omega_0)}+ g^2 )
-\nabla u_{T(\Omega_0)}\cdot (\VD\Ct +\VD\Ct^\top)\nabla u_{T(\Omega_0)}\, \DX\,.
\end{equation}

In \cite{EpHa06b}, the authors have studied this shape optimization problem \cref{eq:benchmark}
in detail and, performing shape analysis in polar coordinates, have shown that
the shape Hessian is both continuous and coercive (when restricted to normal perturbations)
in the $H^{1/2}(\partial\Omega)$-norm. For this reason, minimizers of \cref{eq:benchmark} are stable.

To construct a test case for our numerical simulations, we set the optimal shape
of $\dOin$ to be a circle centered at the origin with radius $0.4$.
For such a choice of $\dOin$, the function (expressed in polar coordinates)
$$u(r, \varphi) = \ln(0.4) - \log(r)$$
satisfies the Dirichlet BVP \eqref{eq:BPdir} with $u_\textrm{in} = 0$
and $u_\textrm{out} = u$. The Neumann data on the interior boundary is $g=2.5$.
The value $J_\mathrm{min}$ of the misfit functional in the optimal shape is approximatively \texttt{28.306941614057237}.
This value has been computed with quadratic isoparametric finite elements on a
sequence of nested meshes; the relative error between the value of the misfit functional
computed on the last and on the second last mesh is approximately $6\cdot 10^{-11}$.

As initial guess, we set $\dOinzero$ to be a circle of radius 0.5 centered at (0.04,0.05).
Note that we have repeated these numerical experiments with other 3 choices for the initial guess
$\dOinzero$ and have obtained similar results. These alternative initial guesses are:
a circle of radius 0.47 centered at (0.07, 0.03), a circle of radius 0.55 centered at (-0.1, 0), 
and a circle of radius 0.5367 centered at (-0.137, 0.03).

To discretize geometric transformations, we consider linear/quadratic/cubic tensorized Schoenberg
B-splines constructed on regular grids \cite{Sc15}.
These grids are refined uniformly (with widths ranging from $1.8\times2^{-1}$ to $1.8\times2^{-6}$)
and are contained in a square (the hold-all domain $D$) that is centered at the origin and has a corner at
(0.95, 0.95), so that $\dOout$ is not modified in the optimization process. Finite element approximations of
$u_{T(\Omega_0)}$ are computed with linear/quadratic isoparametric finite elements on a sequence
of 5 triangular meshes generated using uniform refinement in Gmsh \cite{GeRe09}. Note that
finer meshes are adjusted to fit curved boundaries.

The optimization is carried out by repeating the following simple procedure for a fixed number
of iterations: at every iteration, we
compute a $H^1_0(D)$-descent direction $dT$ by solving \cref{eq:dTQn} and choose the
optimization step $s\in\{0, 0.1, \dots, 1\}$ that minimizes $J(T+ sdT\circ{T})$.
Such a simple optimization strategy is sufficient for our numerical experiments,
although we are aware that it is not efficient. 
The development of more efficient optimization strategies in the context of shape optimization
is a current topic of research. In \cite{ScSiWe16}, the authors obtained promising results with
a BFGS-type algorithm based on a Steklov--Poincar\'{e} metric. We defer to future research
the numerical comparison of optimization strategies for shape optimization.

In \cref{fig:linesearchguess1}, we plot two steps of this simple optimization strategy.
Transformations are discretized with quadratic B-splines built on the fourth grid,
whereas the state $u_{T(\Omega_0)}$ is approximated with linear finite elements on the
second coarsest mesh. Qualitatively, we observe the expected behavior of a (truncated) linesearch.
The predicted-descent line is given by $J(\Ct)- dJ(\Ct, sdT)$, with
$s=0, 0.1, 0.2, 0.3$, and is tangential to $J(T+ sdT\circ{T})$ at $s=0$. This shows empirically that
formula \cref{eq:BerDer} is correct.
\begin{figure}[!htb]
\centering
\includegraphics{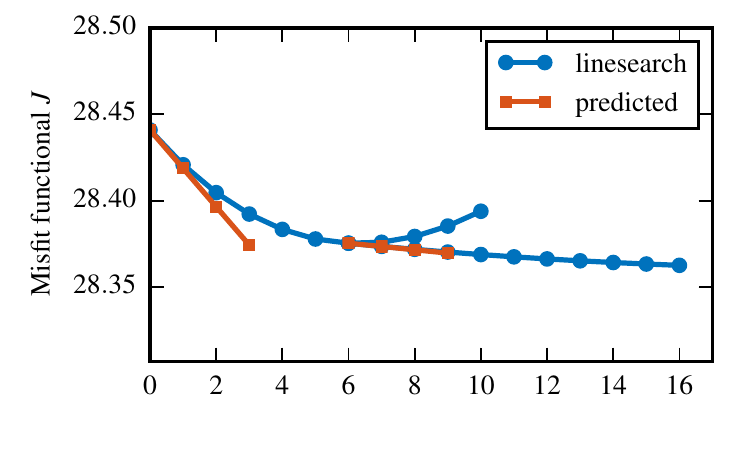}
\caption{Evolution of $J$ on two optimization steps. The second linesearch starts at the minimum
of the first one. The predicted descent is computed evaluating $dJ$ on the (selected) descent direction.}
\label{fig:linesearchguess1}
\end{figure}

Next, we investigate to which accuracy we can solve shape optimization problems.
In particular, we study the impact of the discretization (polynomial-)degree and
(refinement-)level used for the control and the state.
This is done systematically by keeping certain discretization parameters fixed
and varying the remaining ones.
To simplify the exposition, we associate the term \emph{grid} to the
discretization of the control (that is, of geometric transformations),
whereas the term \emph{mesh} refers to the discretization of the state).

First, we fix the control discretization to quadratic B-splines built on the finest grid.
For each finite element mesh (previously generated with Gmsh), we perform 101 steps of our
simple optimization strategy employing linear FE approximations of the state $u_{T(\Omega_0)}$.
Although not displayed,
the sequence of shapes always converges qualitatively to the optimum. For a quantitative comparison,
we store the minimum of $J$ for every linesearch and plot the absolute error with respect to 
$J_\mathrm{min}$ in \cref{fig:convergencehistory_linearfemguess1}
(we plot a convergence history for each mesh). Henceforth, we use the notation \texttt{Jerr}
to refer to this absolute error.
\begin{figure}[!htb]
\includegraphics{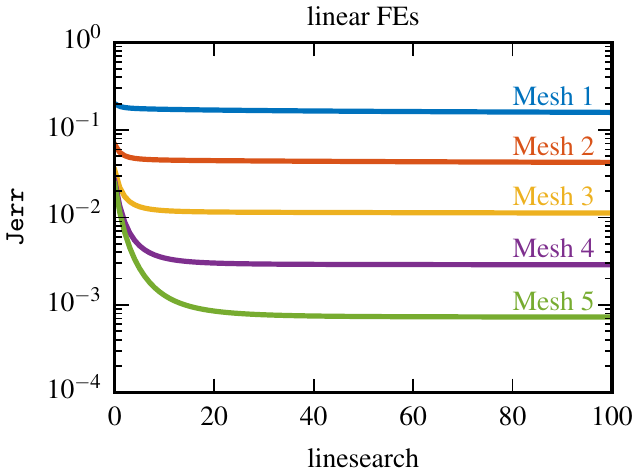}
\includegraphics{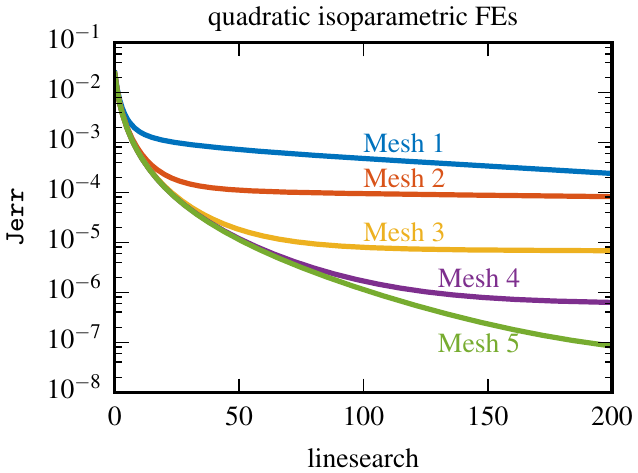}
\caption{Evolution of \texttt{Jerr} when the state variable is discretized with linear (\emph{left})
and quadratic (\emph{right}) FEs  on a different meshes constructed via uniform refinement.
Convergence lines saturate at different levels, which decay algebraically with respect to the mesh width.}
\label{fig:convergencehistory_linearfemguess1}
\end{figure}
We observe that the convergence history lines
saturate at different levels. In particular, the saturation level decays algebraically with respect
to the mesh width. To further investigate the impact of FE approximations on shape optimization,
we repeat this experiment with quadratic isoparamentric FEs.
Again, we observe algebraic convergence with respect to the mesh width, but at higher convergence rate
(note the difference in the y-axis scale). In order to reach the saturation level on finer meshes,
more optimization steps have to be carried out. This issue has been observed in \cite{HiPa15} as well, and 
is probably due to the simplicity of the optimization strategy. Before proceeding further, let us point out that
the saturation level worsens if quadratic isoparametric FEs are replaced by quadratic affine FEs;
see \cref{fig:convergencehistory_11vs12vs22femguess1}.
\begin{figure}[!htb]
\centering
\includegraphics{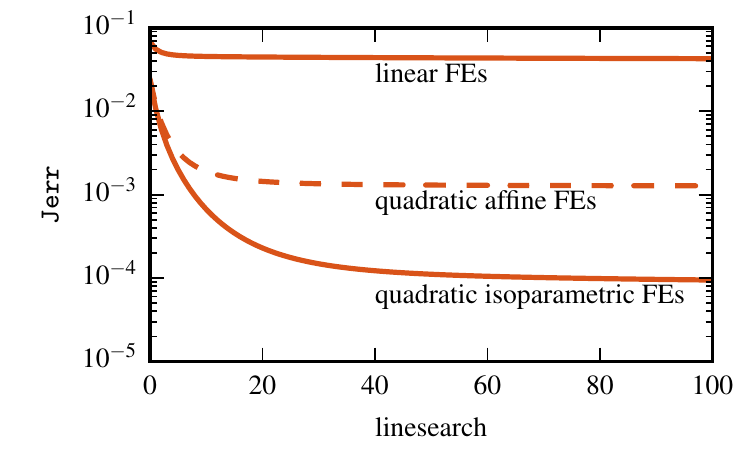}
\caption{Evolution of \texttt{Jerr} when the state variable is discretized with linear, quadratic affine,
and quadratic isoparametric FEs on the second mesh.
Quadratic affine FEs perform worse than their isoparametric counterpart.}
\label{fig:convergencehistory_11vs12vs22femguess1}
\end{figure}

In the previous experiments, we kept the discretization of transformations fixed. 
Now, we test different discretizations.
In \cref{fig:meshsplineconvergenceguess1}, we consider two different discretization degrees
of the control variable (linear/cubic B-splines).
For each of these discretization degrees, we consider 6 grids and 5 meshes.
The state is approximated once with linear and once with quadratic isoparametric FEs.
For each combination, we plot \texttt{Jerr} after 200 iterations.
We observe that both the discretization of the control and of the state
have an impact on \texttt{Jerr} (the algebraic decay with respect to the FE mesh width is conspicuous).
\begin{figure}[!htb]
\includegraphics{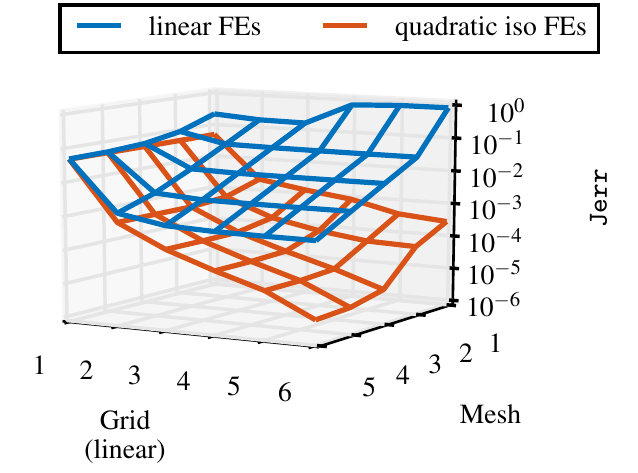}
\includegraphics{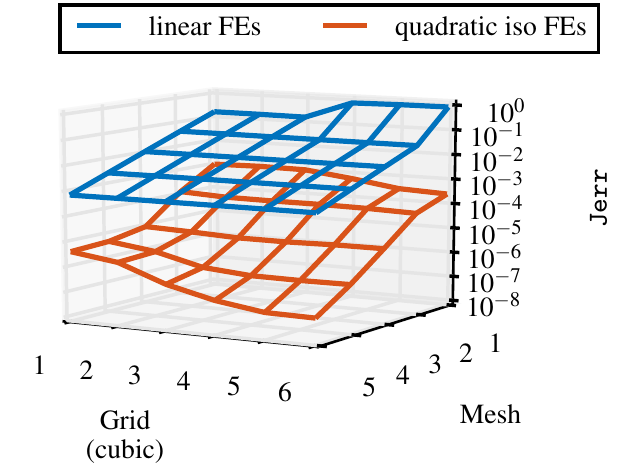}
\caption{Value of \texttt{Jerr} after 201 optimization steps for different combination of 
control and state discretization. The algebraic rate of convergence with respect to mesh refinement
and the benefits of higher-degree discretizations of transformations are clearly visible. }
\label{fig:meshsplineconvergenceguess1}
\end{figure}

In \cref{fig:splineconvergenceguess1} (left), we consider the finest level and highest degree
of the control discretization.
We plot \texttt{Jerr} (after 200 iterations) versus the FE mesh index and consider both linear and quadratic
isoparametric FE approximations of the state.
The algebraic rates of convergence for linear and quadratic FEs read $1.97$ and  $3.24$, respectively.
This rates are in line with our expectations because duality techniques can be employed
to prove superconvergence in the FE approximation of the quadratic functional $J$ \cite{HiPaSa15}.
\begin{figure}[!htb]
\includegraphics{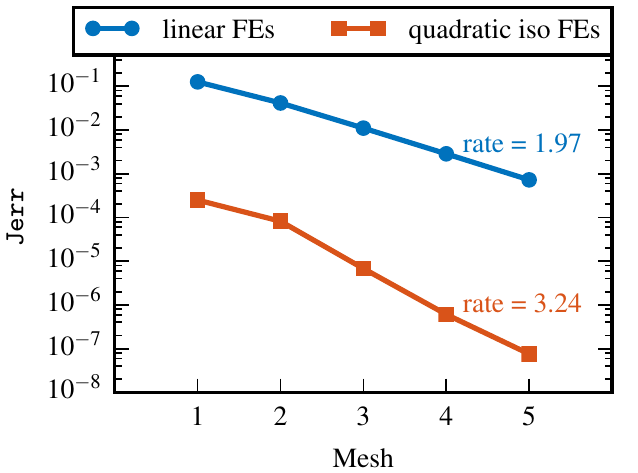}
\hfill
\includegraphics{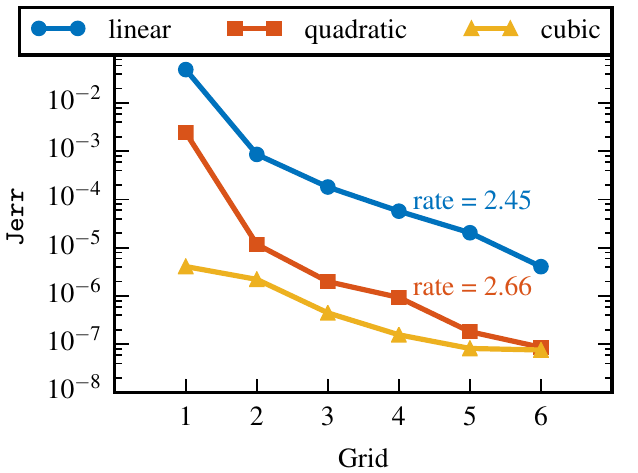}
\caption{Details from \cref{fig:meshsplineconvergenceguess1}:
\emph{Left:} convergence with respect to FE discretization of the state.
\emph{Right:} convergence with respect to B-spline discretization of the control.}
\label{fig:splineconvergenceguess1}
\end{figure}
\cref{fig:meshsplineconvergenceguess1} shows also that \texttt{Jerr} is almost entirely dominated
by the error in the state when transformations are discretized with cubic B-splines.
This is better highlighted in \cref{fig:splineconvergenceguess1} (right).
There, we fix the state discretization to quadratic isoparametric FEs on the
finest mesh and plot \texttt{Jerr} versus the grid index for linear, quadratic,
and cubic B-splines. For instance, grid refinement for cubic B-splines
has a very mild impact on \texttt{Jerr} (due to the FE approximation error of the state).
We observe that the algebraic convergence rate for grid refinement of linear B-splines is approximately
2.45. This rate is higher than our expectations (a perturbation analysis via the Strang lemma
would give rise to only linear convergence).

Finally, we consider the highest discretization level of the control and investigate whether its
discretization degree has an impact on the rate of convergence of \texttt{Jerr} with respect
to the FE discretization of the state.
Let us recall that the regularity
of the state on a perturbed domain depends, in principle, on the regularity of the domain.
When perturbed with less smooth transformations, the resulting domain may not
guarantee that the regularity of the state is preserved. This may have a negative impact on
the FE approximation. In \cref{fig:meshconvergence_impactofbsplineregularityguess1},
we plot \texttt{Jerr} versus FE mesh refinement (both for linear and quadratic FEs) when
transformations are discretized with linear B-splines.
When the state is approximated with quadratic isoparametric FEs, the control discretization error
is negligible only on much finer grids.
However, it seems that the FE convergence rate is not affected by the
the discretization degree of the control (in \cref{fig:splineconvergenceguess1}, left,
the control is discretized with cubic B-splines, and a similar convergence rate is observed).

\begin{figure}[!htb]
\centering
\includegraphics{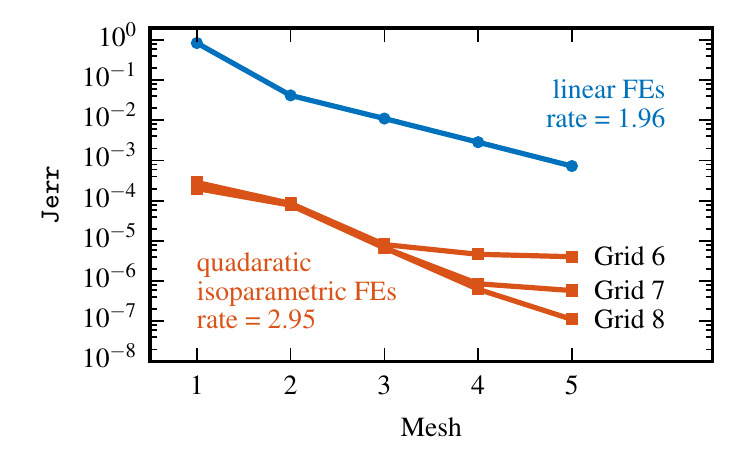}
\caption{Surprisingly, discretizing transformations with linear B-splines instead of cubic
does not affect the rate of convergence of \texttt{Jerr} with respect to FE discretization of the state
(compare with \cref{fig:splineconvergenceguess1}, left).}
\label{fig:meshconvergence_impactofbsplineregularityguess1}
\end{figure}

\subsection{Energy minimization in Stokes flow}
\label{subsec:stokes}
Next, we consider the shape optimization of a 2D obstacle $\Omega_o$ embedded in a viscous fluid $\Omega_c$
(see \cref{fig:stokescp}).
This problem has been thoroughly investigated in \cite{ScSi15}.
The state variables are the velocity $\Vu$ and the pressure $p$ of the fluid. These are
governed by Stokes' equations, whose weak formulation reads:
find $\Vu\in H^1(\Omega_c;\bbR^2)$ and $p\in L^2_0(\Omega_c)=\{p \in L^2(\Omega_c) : \int_{\Omega_c} p \DX = 0\}$ such that
$\Vu\vert_{\Gamma_{\text{in}}} = \Vg,\ u\vert_{\Gamma_{wall}} = 0$ and 
\begin{equation} \label{eq:stokesweak}
	\int_{\Omega_c} \sum_{i=1}^2\nabla u_i\cdot \nabla v_i - p \Div \Vv  + q\Div \Vu \DX  = 0
\end{equation}
for all $\Vv\in H^1(\Omega_c;\bbR^2)$ and $q\in L^2_0(\Omega_c)$ such that $\Vv\vert_{\Gamma_{\text{in}}} = 0$ and $\Vv\vert_{\Gamma_{\text{wall}}} = 0$.
It is known that \cref{eq:stokesweak} admits a unique solution if the computational domain
$\Omega_c$ is Lipschitz \cite{GiRa86, Te01}.

\begin{figure}[!htp]
    \centering
    \includegraphics{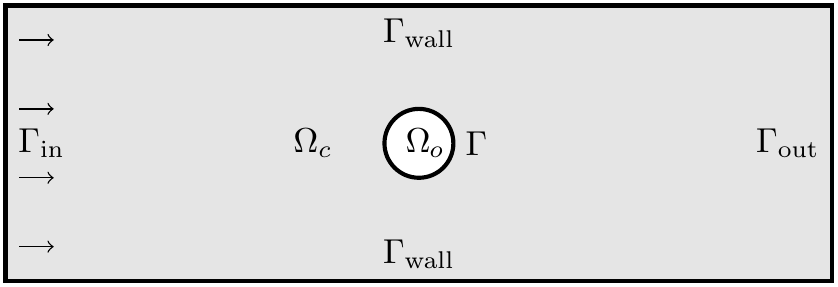}
    \caption{Computational domain for the Stokes BVP \eqref{eq:stokesweak}.
        The external boundary comprises: the inflow boundary $\Gamma_\mathrm{in}$, the outflow
        boundary $\Gamma_\mathrm{out}$, the obstacle boundary $\Gamma$,
        and the wall boundary $\Gamma_\mathrm{wall}$. The obstacle boundary is a circle centered
        at the origin with radius $0.5$, whereas the remaining boundary is a rectangle with width
        $12$ and height $5$.}
    \label{fig:stokescp}
\end{figure}

The energy dissipated in the fluid due to shear forces is given by 
\begin{equation}\label{eq:stokesJ}
    \Cj(\Omega_c, \Vu, p) = \int_{\Omega_c} \sum_{i=1}^2 \nabla u_i^\top \nabla u_i \DX \equiv\int_{\Omega_c}  \lVert\VD\Vu\rVert_{F}^2 \DX, \,,
\end{equation}
and its shape derivative is given by\footnote{We believe that this volume based formula
(which can be computed with \cite{Sc16})
is already known to the shape optimization community, although we did not manage to
find it explicitly in available publications.}
\begin{multline} \label{eq:stokedJ}
d\Cj(T, \Ct)= \int_{T(\Omega_c)}  
\sum_{i=1}^2 \left( \nabla u_i^\top \nabla u_i \Div\Ct-\nabla u_i^\top(\VD{\Ct}^\top + \VD{\Ct} ) \nabla u_i\right)\\
+ 2\tr(\VD \Vu \VD{\Ct} )p - 2p\Div \Vu \Div\Ct\DX\,.
\end{multline}

Minimizing \eqref{eq:stokesJ} subject to \eqref{eq:stokesweak} alone is problematic because
energy dissipation can be reduced by shrinking or removing the obstacle. 
Therefore, we introduce two additional constraints: we require the area and the 
barycentre of the obstacle to remain constant. 
Similarly to  \cite{ScSi15}, we define the functionals
\begin{equation}
	\begin{aligned}
		\Ca(T) &\coloneqq \int_{T(\Omega_c)} 1 \DX - \int_{\Omega_c} 1 \DX\,, \\
		\Cb_i(T) &\coloneqq \int_{T(\Omega_c)} x_i \DX - \int_{\Omega_c} x_i \DX , \text{ for } i=1,2\,. 
	\end{aligned}
\end{equation}
For the sake of simplicity, we enforce the constraints $\Ca(T)=0$ and $\Cb_i(T)=0$, $i=1,2$ 
onto the shape optimization problem in the form of penalty functions, that is, we replace the functional
\eqref{eq:stokesJ} with
\begin{equation}\label{eq:stokespenalized}
	\Cj_p(T) = \Cj(T) + \frac{\mu_0}{2} \Ca^2(T) + \sum_{i=1}^2 \frac{\mu_i}{2} \Cb_i^2(T)\,,
\end{equation}
where $\Ca^2(T)\coloneqq(\Ca(T))^2$, $\Cb_i^2(T)\coloneqq (\Cb_i(T))^2$, and
 $0\leq \mu_i\in\bbR$, i=0,1,2.
The shape derivatives of the squared constraints are given by 
\begin{equation}
	\begin{aligned}
		d{A^2}(T, \Ct) &= 2 \Ca(T) \int_{T(\Omega_c)} \Div \Ct\DX\,, \\
		d{B_i^2}(T, \Ct) &= 2 \Cb_i(T) \int_{T(\Omega_c)} \Div (x_i \Ct) \DX, \text{ for } i=1, 2\,.
	\end{aligned}
\end{equation}

The state variables are discretized with Taylor-Hood P2-P1 finite elements on
a triangular mesh. This discretization is stable \cite{ElSiWa14}.
The resulting linear system can then be solved using GMRES and a block-diagonal preconditioner 
based on the stiffness matrix and on the mass matrix for the velocity- and the pressure-block,
respectively \cite{ElSiWa14}. 
\begin{figure}[htbp]
    \begin{center}
        \includegraphics[width=0.445\textwidth]{./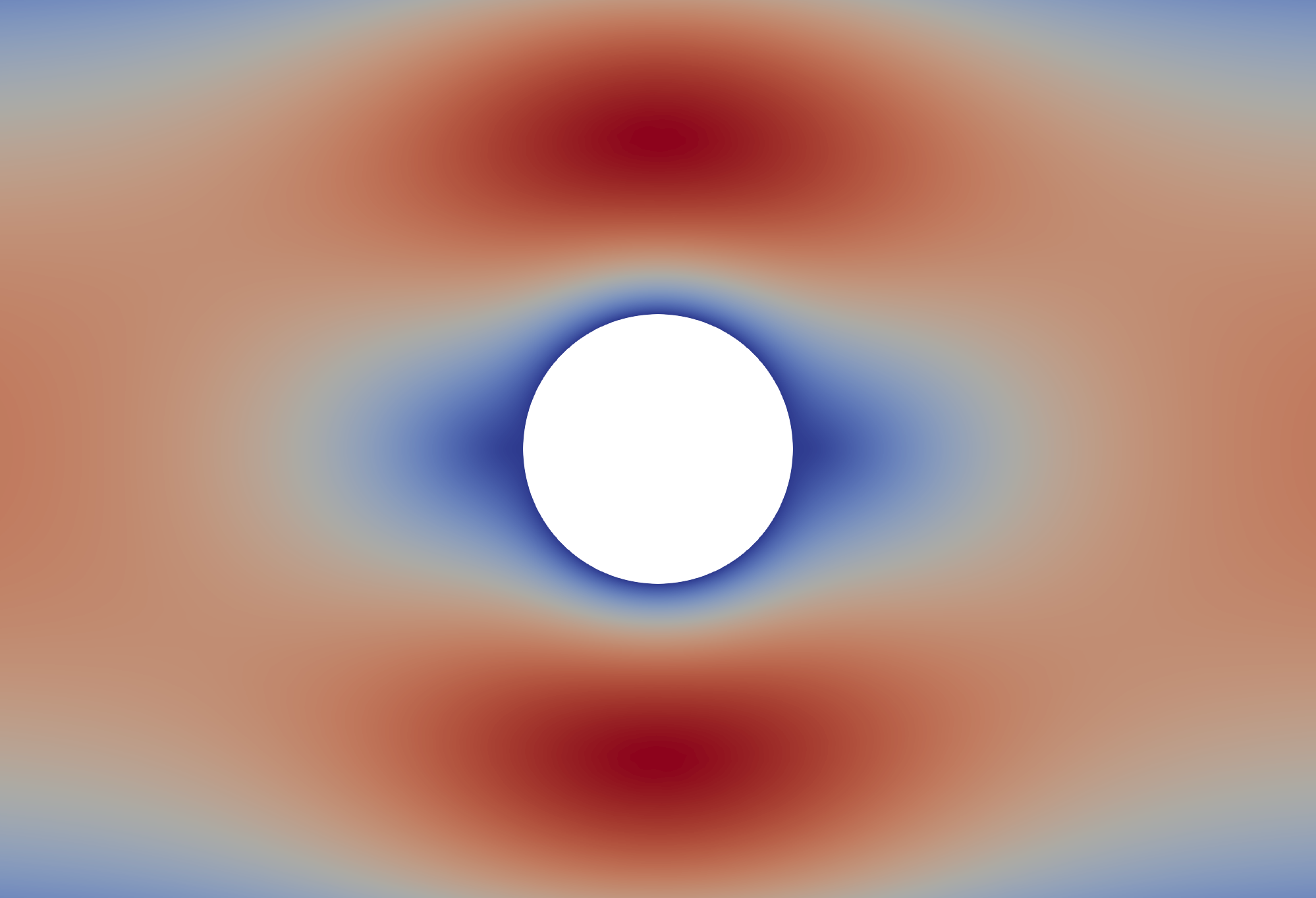} 
        \includegraphics[width=0.445\textwidth]{./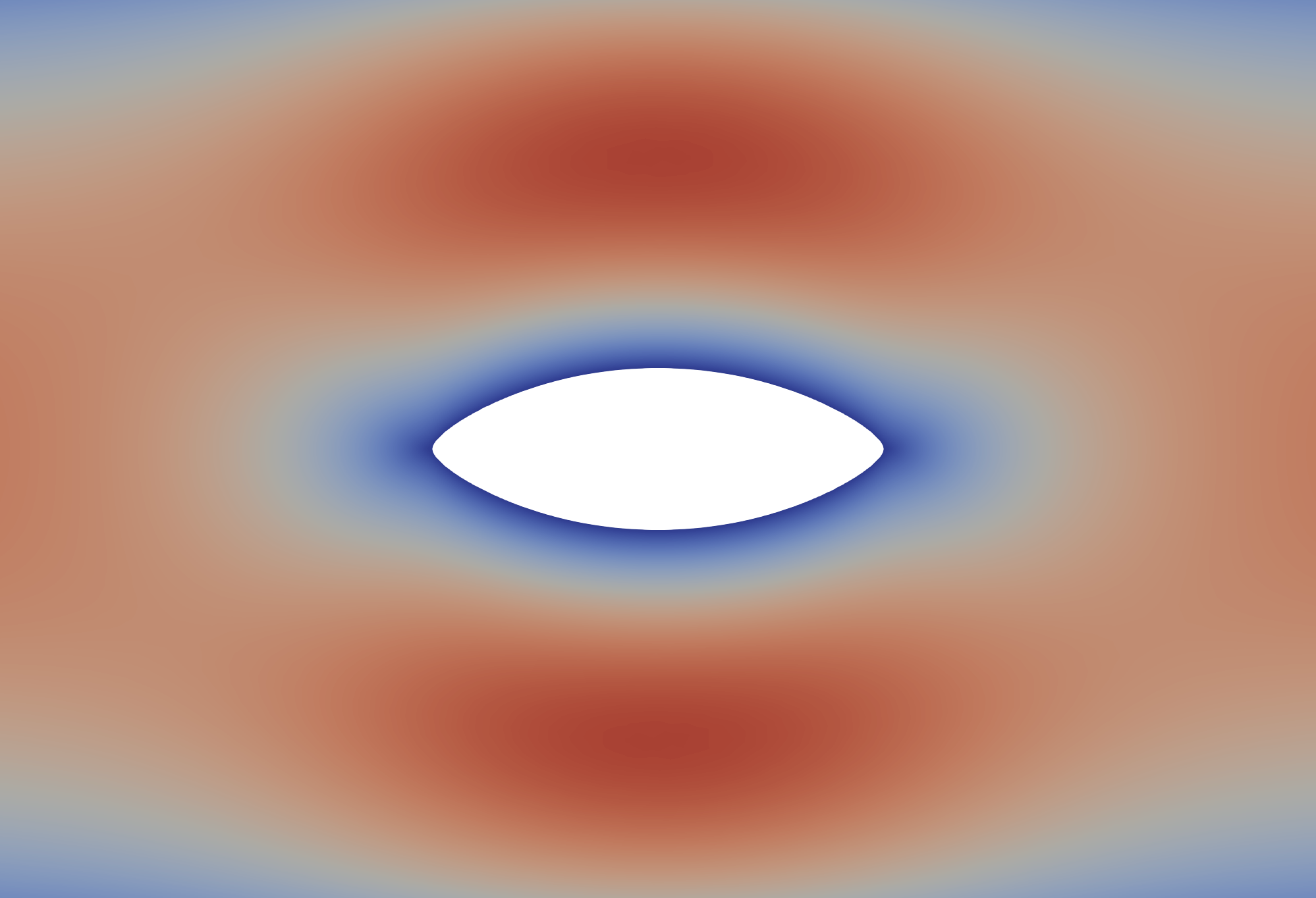} \\
        \includegraphics{./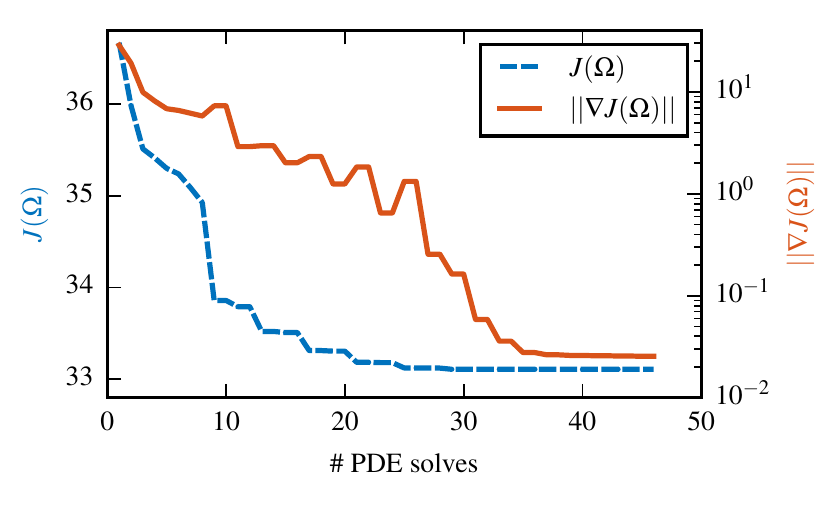}
    \end{center}
    \caption{\emph{Left:} initial shape. \emph{Right:} optimized shape using quadratic B-splines. The colour indicates the magnitude of the velocity field. \emph{Bottom:} convergence history; the gradient is measured using the $H^1(D)$-norm.}
    \label{fig:stokes-shapes}
\end{figure}
The control is discretized with cubic B-splines on a rectangular
grid that covers the entire channel.
We can see that the optimized shape in \cref{fig:stokes-shapes} is qualitatively similar to the one obtained by \cite{ScSi15},
although the shape optimization algorithm used in \cite{ScSi15} relies on the
boundary based formulation of \cref{eq:stokedJ}. 
For this example we have used a simple steepest descent with line search; the number of optimization steps can be drastically reduced employing
more sophisticated optimization algorithms \cite{ScSi15}.
\subsection{Compliance minimization under linear elasticity}
We conclude this section on numerical experiments with another classical example from shape optimization:
compliance minimization of a cantilever subject to a given load (see \cref{fig:elasticitycp}).
\begin{figure}[!htp]
    \centering
    \includegraphics{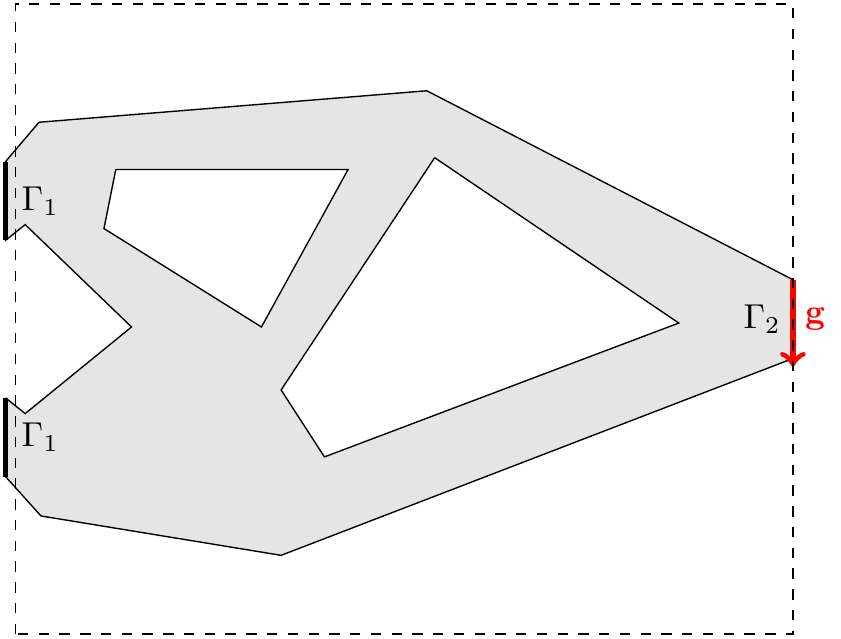}
    \caption{Computational domain for the elasticity BVP \eqref{eq:elasticityweak}.
On the two segments $\Gamma_1$ we impose homogeneous Dirichlet boundary conditions.
On $\Gamma_2$  we impose $\Vg=(0,-1)^\top$ as Neumann boundary condition. The dashed rectangle indicated the hold-all domain $D$. We exclude the part close to the left boundary, as singularities occur in the solution to the PDE close to the corners.}
    \label{fig:elasticitycp}
\end{figure}
The structural behaviour of the cantilever is modelled by linear elasticity.
In particular, we consider the following variational problem: find $\Vu\in H^1(\Omega;\bbR^2)$ such that
$\Vu\vert_{\Gamma_1} = 0$ and 
\begin{equation} \label{eq:elasticityweak}
    \int_{\Omega} (A e(\Vu)):\nabla \Vv \DX  - \int_{\Gamma_2} \Vg \cdot \Vv \dS = 0
\end{equation}
for all $\Vv\in H^1(\Omega;\bbR^2)$ with $\Vv\vert_{\Gamma_1} = 0$.
In \cref{eq:elasticityweak}, the symbol $:$ denotes the Frobenius inner product of matrices,
$e(\Vu) = \sym(\nabla \Vu)= \frac{1}{2}(\nabla \Vu + \nabla \Vu^\top)$ denotes the
strain tensor, and $A$ encodes the Hookes' law of the material.
It is well known that \cref{eq:elasticityweak} admits a unique stable solution for compatible data
$\Vg$ \cite[Page 22]{Ci02}.
In this numerical experiment, we minimize the compliance
\begin{equation}\label{eq:elasticityJ}
    \Cj(\Omega, \Vu) = \int_{\Omega} (A e(\Vu)): e(\Vu) \DX\,,
\end{equation}
whose shape derivative reads (the formula can be derived following closely \cite[Thm 7.7]{St14})
\begin{equation}
    \begin{aligned}
        d\Cj(T,\Ct) =& \int_{T(\Omega)} A\sym(\VD \Vu \VD \Ct):\sym(\VD \Vu)\DX\\
        & +\int_{T(\Omega)} A\sym(\VD \Vu):\sym(\VD \Vu\VD \Ct)\DX\\
        & +\int_{T(\Omega)} A\sym(\VD \Vu):\sym(\VD \Vu)\Div \Ct\DX\,.
    \end{aligned}
\end{equation}
Similarly to \cref{eq:stokespenalized}, we enforce a volume constraint by adding a penalty function
to \cref{eq:elasticityJ}.
We discretize the state variables with piecewise linear Lagrangian finite elements on a triangular mesh;
the control is discretized with cubic B-splines on a rectangular grid covering the cantilever.
\begin{figure}[htbp]
    \begin{center}
        \includegraphics[width=0.49\textwidth]{./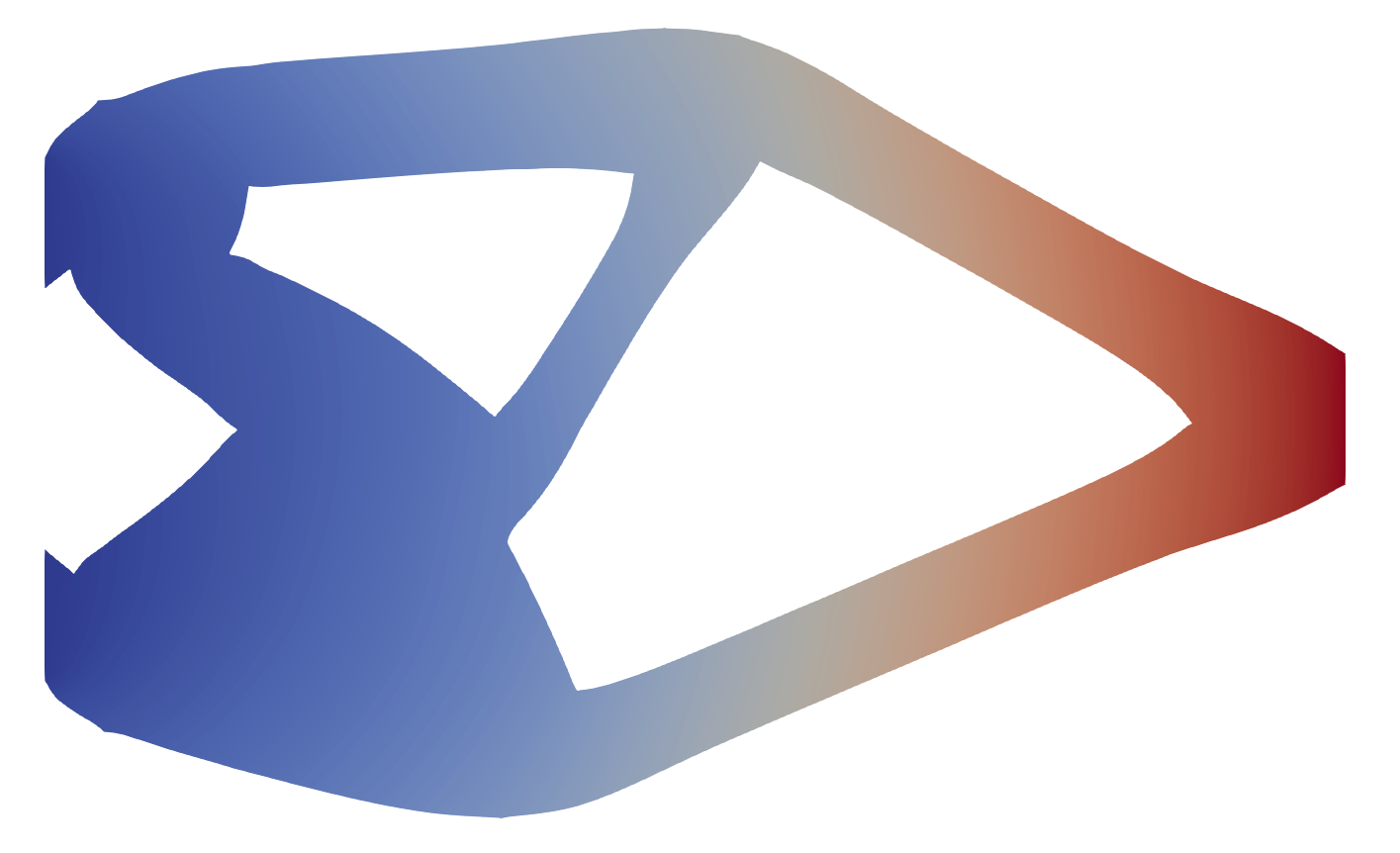} 
        \includegraphics[width=0.49\textwidth]{./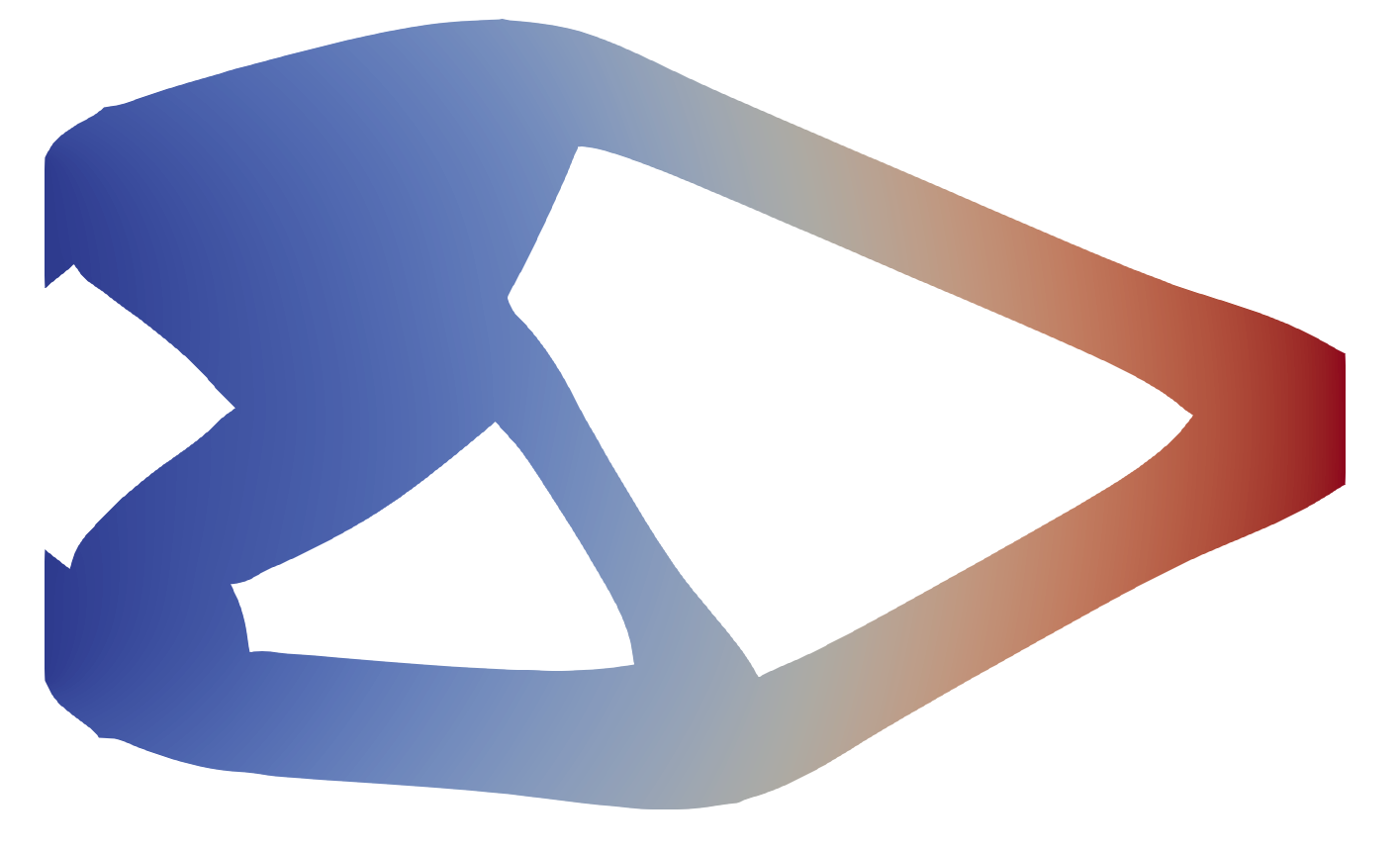} 
        \includegraphics{./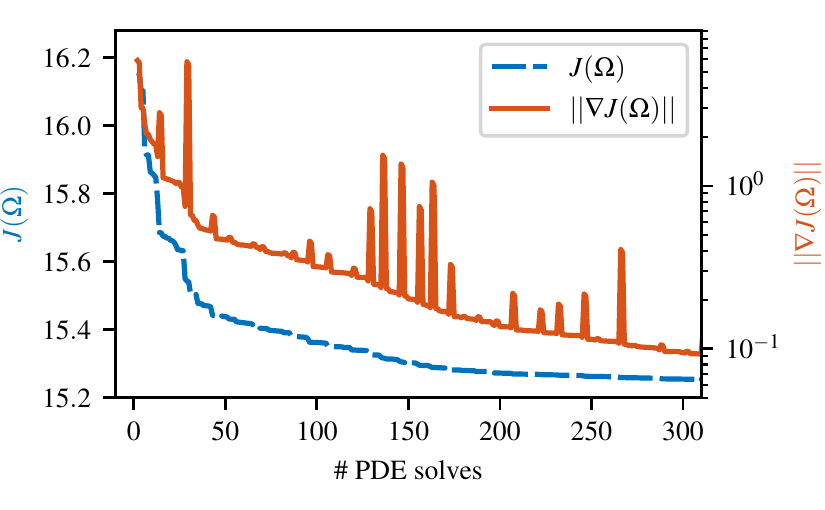}
    \end{center}
    \caption{\emph{Top left}: Optimal shape using cubic B-splines. \emph{Top right:} Optimal shape using as initial guess the domain obtained by flipping \cref{fig:elasticitycp} with respect to the horizontal axis.
    \emph{Bottom:} Convergence history for the left shape; the gradient is measured using the $H^1(D)$-norm.}
    \label{fig:elasticity-optimal}
\end{figure}
We use the same Lam\'e parameters as in \cite{AlPa06} ($E=15$, $\nu=0.35$) and obtain a qualitatively similar shape, which is shown in \cref{fig:elasticity-optimal}.

\section{Conclusion}
We have formulated shape optimization problems in terms of deformation diffeomorphisms.
This perspective simplifies the treatment of PDE-constrained shape optimization problems
because it couples naturally with isoparametric finite element discretization of the PDE-constraint.
In particular, it retains the asymptotic behavior of higher-order FE discretization, and it allows
the solution of PDE-constrained shape optimization problems to high accuracy, as confirmed by
detailed numerical experiments.
This shape optimization method can be implemented in standard finite element software and
used to tackle challenging shape optimization problems that stem from industrial applications.
The approach advocated is modular and can be combined with more advanced optimization algorithms,
such as that of Schulz et al.~\cite{ScSiWe16}; research in this vein will form the basis of
future work.


\end{document}